\def\titlerunning#1{\gdef\titrun{#1}}
\def\author#1{\gdef\autrun{\def\and{\unskip, }#1}\gdef\@author{#1}}
\def\address#1{{\def\and{\\\hspace*{18pt}}\renewcommand{\thefootnote}{}%
\footnote {#1}}%
\markboth{\autrun}{\titrun}}
\def\email#1{\hspace*{4pt}{\em e-mail}: #1}
\def\MSC#1{{\renewcommand{\thefootnote}{}%
\footnote{\emph{Mathematics Subject Classification (2020):} #1}}}
\def\keywords#1{\par\medskip
\noindent\textbf{Keywords:} #1}
\newtheorem{theorem}{Theorem}[section]
\newtheorem{prop}[theorem]{Proposition}
\newtheorem{cor}[theorem]{Corollary}
\newtheorem{lemma}[theorem]{Lemma}
\newtheorem{result}[theorem]{Result}
\theoremstyle{definition}
\newtheorem{remark}[theorem]{Remark}
\numberwithin{equation}{section}
\def\cB{\mathcal B}
\def\cC{\mathcal C}
\def\cG{\mathcal G}
\def\cO{\mathcal O}
\def\cP{\mathcal P}
\def\cQ{\mathcal Q}
\def\cR{\mathcal R}
\def\cS{\mathcal S}
\def\PG{{\rm PG}}
\def\GF{{\rm GF}}
\begin{document}


\baselineskip=16pt

\titlerunning{}

\title{Strongly regular graphs from hyperbolic quadrics and their maximal cliques}

\author{
Antonio Cossidente
\and
Jan De Beule
\and
Giuseppe Marino
\and
Francesco Pavese
\and
Valentino Smaldore
}

\date{}

\maketitle

\address{
A. Cossidente: Dipartimento di Ingegneria, Universit{\`a} degli Studi della Basilicata, Contrada Macchia Romana, 85100, Potenza, Italy; \email{antonio.cossidente@unibas.it}
\and
J. De Beule: Department of Mathematics and Data Science, 
Vrije Universiteit Brussel (VUB),  Pleinlaan 2, B--1050 Brussels, 
Belgium and Department of 
Mathematics: Logic, Analysis and Discrete Mathematics, Ghent University, Krijgslaan 281 (S8), B-9000 Gent, Belgium; \email{Jan.De.Beule@vub.be}
\and
G. Marino: Dipartimento di Matematica e Applicazioni ``Renato Caccioppoli'', Universit{\`a} degli Studi di Napoli ``Federico II'', Complesso Universitario di Monte Sant'Angelo, Cupa Nuova Cintia 21, 80126, Napoli, Italy; \email{giuseppe.marino@unina.it}
\and
F. Pavese: Dipartimento di Meccanica, Matematica e Management, Politecnico di Bari, Via Orabona 4, 70125 Bari, Italy; \email{francesco.pavese@poliba.it}
\and
V. Smaldore: Dipartimento di Tecnica e Gestione dei Sistemi Industriali, Università degli Studi di Padova, Stradella S. Nicola 2, 36100 Vicenza, Italy; \email{valentino.smaldore@unipd.it}
}


\MSC{Primary 51E20; Secondary 05E30.}

\begin{abstract}
Let $Q^+(2n+1,q)$ be a hyperbolic quadric of $\PG(2n+1,q)$. Fix a generator $\Pi$ of the quadric.
Define $\cG_n$ as the graph with as vertex set the points of $Q^+(2n+1,q)\setminus \Pi$ and two vertices adjacent
if they either span a secant to $Q^+(2n+1,q)$ or a line contained in $Q^+(2n+1,q)$ meeting $\Pi$ non-trivially. Then such a construction defines a strongly regular graph, which is the complement of a (non-induced) subgraph of the collinearity graph of $Q^+(2n+1,q)$. In this paper, we directly compute the parameters of $\cG_n$, which is cospectral, when $q=2$, to the tangent graph $NO^+(2n+2,2)$, but it is non-isomorphic for $n\geq3$. We also classify the maximal cliques of $\cG_3$ for $q=2$,
proving as a by-product the non-isomorphism with the graph $NO^+(8,2)$.
\keywords{Strongly regular graphs, hyperbolic quadrics, maximal cliques}
\end{abstract}

\section{Introduction}
A \textit{strongly regular graph} with parameters $(v,k,\lambda,\mu)$ is a graph with $v$ vertices where each vertex has degree $k$, any two adjacent vertices have $\lambda$ common neighbours, and any two non-adjacent vertices have $\mu$ common neighbours. The \emph{spectrum} of a strongly regular graph is the triple $(k,\theta_{1}^{m_{1}},\theta_{2}^{m_{2}})$. Therefore, two strongly regular graphs with the same parameters are \textit{cospectral}, that is, they have the same spectrum. Two isomorphic graphs are always cospectral, but the converse is not always true. Even in the family of strongly regular graphs, there are examples with the same parameters but not isomorphic. Let $Q^+(2n+1,2)$ be a non–degenerate hyperbolic quadric of $\PG(2n+1,2)$. Let $NO^+(2n +2, 2)$ be the graph whose vertices are the points of $\PG(2n+1,2) \setminus Q^+(2n+1,2)$ and two vertices $P_1$ and $P_2$ are adjacent if the line joining $P_1$ and $P_2$ is a line tangent to $Q^+(2n+1, 2)$. The graph $NO^+(2n +2, 2)$ is a strongly regular graph. In this paper, we provide a construction for strongly regular graphs arising from hyperbolic quadrics. Fix a generator $\Pi$ of $Q^+(2n+1,2)$, and let $\cG_n$ be the graph whose vertices are the points of $Q^+(2n+1,2)\setminus \Pi$, and two vertices $P_1$ and $P_2$ are adjacent if the line joining $P_1$ and $P_2$ is either a secant of $Q^+(2n+1, 2)$, or intersects $\Pi$. It turns out that $\cG_n$ is a strongly regular graph, cospectral to $NO^+(2n+2,2)$. Note that this construction also works in the case $q>2$, giving rise to strongly regular graphs for all quadrics $Q^+(2n+1,q)$. For all values of $q$, the graph was already known as the complement of the graph defined in \cite{BIK}. The main goal of this paper is to give a full geometric characterisation of such a graph. The first interesting case is $(n,q)=(3,2)$, where the graph is cospectral to $NO^+(8,2)$. The so-called {\em ovoids} of $Q^+(7,2)$ are obvious examples of cliques of $\cG_3(2)$ (which also turn out to be cliques of maximal size), and this observation motivates us to classify and geometrically characterise all maximal cliques of $\cG_n$ for $(n,q)=(3,2)$. This characterisation includes a full description of all types of maximal cliques in terms of substructures of the underlying quadric. As a by-product, this complete classification gives the non-isomorphism between $\cG_n$ and $NO^+(2n+2,2)$ whenever $n \geq 3$.

The paper is outlined as follows. In Section \ref{sec2} we give the necessary preliminaries on spectral graph theory of strongly regular graphs and projective geometry of hyperbolic quadrics, while in Section \ref{sec3} we compute the parameters of $\cG_n$. 
Finally, in Section \ref{sec4} we compute cliques of $\cG_3$, thus proving the non-isomorphism issue. 

\section{Preliminaries}\label{sec2}

\subsection{Spectra of strongly regular graphs}
A graph $\Gamma$ is an ordered pair $(V(\Gamma),E(\Gamma))$, where $V(\Gamma)$ is a non-empty set of vertices and $E(\Gamma)$ is the set of edges, i.e. unordered pairs of vertices. If $e=uv$ we say that the edge $e$ joins $u$ and $v$, and $u$ and $v$ are called adjacent vertices or neighbours.  A \textit{clique} of the graph $\Gamma$ is a set of pairwise adjacent vertices. A clique is said to be \textit{maximal} if it is maximal with respect to set theoretical inclusion. 

Spectral graph theory is a widely used combinatorial approach to the study of graphs based on the eigenvalues of their adjacency matrix. We refer the reader to \cite{BH} for more details on spectral graph theory. Strongly regular graphs were introduced by R. C. Bose in \cite{Bose} in 1963, and ever since then they have been extensively investigated. In particular, the eigenvalues of the adjacency matrix of a strongly regular graph are known; see \cite{brvan}:
     a strongly regular graph $G$ with parameters $(v,k,\lambda,\mu)$ has exactly three eigenvalues: $k$, $\theta_{1}$ and $\theta_{2}$ of multiplicity, respectively, $1$, $m_{1}$ and $m_{2}$, where:
    $$\theta_{1}=\frac{1}{2}\big[(\lambda-\mu)+\sqrt{(\lambda-\mu)^{2}+4(k-\mu)}\big],$$
    $$\theta_{2}=\frac{1}{2}\big[(\lambda-\mu)-\sqrt{(\lambda-\mu)^{2}+4(k-\mu)}\big],$$
    $$m_{1}=\frac{1}{2}\Big[(v-1)-\frac{2k-(v-1)(\lambda-\mu)}{\sqrt{(\lambda-\mu)^{2}+4(k-\mu)}}\Big],$$
    $$m_{2}=\frac{1}{2}\Big[(v-1)+\frac{2k-(v-1)(\lambda-\mu)}{\sqrt{(\lambda-\mu)^{2}+4(k-\mu)}}\Big].$$
 In an analogous way, we can express the parameters as a function of the spectrum. Since spectrum and parameters of strongly regular graphs are equivalent, two strongly regular graphs are cospectral if and only if they have the same parameters. It is still an open problem the characterisation of strongly regular graphs \emph{determined by their spectrum}. 

\subsection{Projective spaces and hyperbolic quadrics}

Let $\PG(r - 1, q)$ be the projective space of projective dimension $r - 1$ over $\GF(q)$ equipped with homogeneous projective coordinates $X_1, \dots, X_{r}$. We will use the term $n$--space of $\PG(r - 1, q)$ to denote an $n$--dimensional projective subspace of $\PG(r - 1, q)$. Let $\theta_r$ be the number of points of $\PG(r-1,q)$, then
\[\theta_r=\frac{q^r-1}{q-1}.\]
We shall find it helpful to represent the projectivities of $\PG(r - 1, q)$ by invertible $r \times r$ matrices over $\GF(q)$ and consider the points of $\PG(r - 1, q)$ as column vectors, with matrices acting on the left. Let $U_i$ be the points that have $1$ in the $i$--th position and $0$ elsewhere.

Let $Q^+(2n+1,q)$ be a hyperbolic quadric of $\PG(2n+1,q)$. Up to choice of basis, this quadric is the set of points of the variety given by
\begin{equation*}
    Q: X_1X_{2n+2}+X_2X_{2n+1}+\ldots+X_nX_{n+1}=0.
\end{equation*}
A projective subspace of maximal dimension contained in $\cQ$ is called a {\em generator} of $\cQ$, and the projective dimension of the generators of 
$\cQ$ equals $n$. The number of points of $Q^+(2n+1,q)$ equals $\frac{(q^n+1)(q^{n+1}-1)}{q-1}$, while the number of generators equals 
$\prod_{i=1}^{n+1}(q^{n-i+1}+1)=2(q+1)(q^2+1)\cdot...\cdot(q^{n}+1)$. The generators split into two families of size $(q+1)(q^2+1)\cdot...\cdot(q^{n}+1)$. 
An important property of the hyperbolic quadric (and all finite classical polar spaces), is the one-or-all property (or axiom), i.e., given a point $P$ and a line $l$
of the polar space, $P \not \in l$, then either $P$ is collinear with exactly one point of $l$ or with all points of $l$. Hence, if $P$ is collinear with two points of $l$, then the plane $\langle l, P \rangle$ is completely contained in the polar space. 

An ovoid of $Q^+(2n+1,q)$ is a set $\cO$ of points such that every generator of $Q^+(2n+1,q)$ meets $\cO$ in exactly one point. Necessarily, $\cO$ contains $q^n+1$ points. Ovoids in quadrics (and more generally finite classical polar spaces) are rare, but for $n=3$ and $q=2$, there is, up to isomorphism,
a unique ovoid of $Q^+(7,2)$, which will play an important role in classifying cliques of $\cG_3$ for $q=2$. 

For $n=2$, an ovoid of $Q^+(5,q)$ is by Klein correspondence equivalent with a spread of the projective
space $\PG(3,q)$. Hence there are many examples of ovoids of $Q^+(5,q)$. An important example for 
this paper is the elliptic quadric $Q^-(3,q)$, which is the set of points of the variety given by 
\begin{equation*}
    Q: F(X_1,X_2) + X_3X_4=0,
\end{equation*}
with $F(X_1,X_2)$ a homogeneous and irreducible quadratic form over the field $\GF(q)$. 

The quadratic form $X_1X_{2n+2}+X_2X_{2n+1}+\ldots+X_nX_{n+1}$ defining the quadric has associated polar form $f$, which is non-degenerate and
orthogonal in odd characteristic, symplectic in even characteristic. The form $f$ gives rise to a polarity of the ambient space $\PG(2n+1,q)$, denoted as 
$\perp$, and with the property that for any $d$-subspace $\pi$ contained in $\cQ$, the space $\pi^\perp$ is the tangent space to $\cQ$ at $\pi$. 
We will rely on the well-known intersection properties of $\pi^\perp$ and the quadric $\cQ$ for various subspaces $\pi$ contained or not contained
in $\cQ$. For details on hyperbolic quadrics, we refer the reader to \cite{HT}. When $n=3$, the quadric $Q^+(7,q)$ is the set of points of the variety given by:
\begin{equation*}
    Q: X_1X_8+X_2X_7+X_3X_6+X_4X_5=0.
\end{equation*}
The quadric contains $(q+1)(q^2+1)(q^3+1)$ points, and the generators of the quadric are the $2(q+1)(q^2+1)(q^3+1)$ solids, which split into two families of size $(q+1)(q^2+1)(q^3+1)$. An ovoid of size $q^3+1$ is equivalent to a spread of $q^3+1$ solids, see \cite{CP}. If $q=2$, $Q^+(7,2)$ contains 135 points, 270 generator solids, 135 in each family, and the ovoids (and spreads) have size 9.

\section{The strongly regular graph $\cG_n$}\label{sec3}

Consider the quadric $Q^+(2n+1,q)$ with $\perp$ its induced polarity of $\PG(2n+1,q)$. Fix a generator $\Pi$ of $Q^+(2n+1,q)$. The set of vertices $V$ of $\cG_n$ consists of the points of
$Q^+(2n+1,q) \setminus \Pi$. We define two relations in $V$. Let $P,Q \in V$, then $P \sim_1 Q$ if and only if the line $\langle P,Q \rangle$ is secant to $Q^+(2n+1,q)$, and
$P \sim_2 Q$ if and only if the line $\langle P,Q \rangle$ is contained in $Q^+(2n+1,q)$ and meets the generator $\Pi$ in a point. The adjacency relation of $\cG_n$
is the union of $\sim_1$ and $\sim_2$. 

\begin{theorem}\label{mainth}
The graph $\cG_n$ is a strongly regular graph with parameters $v=\frac{(q^n+1)(q^{n+1}-1)}{q-1}-\frac{q^{n+1}-1}{q-1}=\frac{q^n(q^{n+1}-1)}{q-1}$, $k=q^{2n}-1$, $\lambda =q^{2n-1}(q-1)-2$
and $\mu = (q^{2n-1}+q^{n-1})(q-1)$.
\end{theorem}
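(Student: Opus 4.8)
The plan is to verify the four parameters $(v,k,\lambda,\mu)$ by direct counting in the quadric, systematically organizing the points of $V = Q^+(2n+1,q)\setminus\Pi$ according to their position relative to the fixed generator $\Pi$ and to a fixed vertex $P$. First I would record $v$: the quadric has $\frac{(q^n+1)(q^{n+1}-1)}{q-1}$ points and $\Pi$ contributes $\theta_{n+1}=\frac{q^{n+1}-1}{q-1}$ of them, all of which are removed, giving the stated $v=\frac{q^n(q^{n+1}-1)}{q-1}$ after simplification. For $k$, I would fix $P\in V$ and compute $P^\perp\cap\cP$, the tangent cone at $P$, which is a cone with vertex $P$ over a $Q^+(2n-1,q)$; the neighbours of $P$ under $\sim_1$ are exactly the points of $\cP\setminus\Pi$ \emph{not} in this tangent cone (those spanning a secant with $P$), while the neighbours under $\sim_2$ are the points on lines of $\cP$ through $P$ meeting $\Pi$. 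I would count each contribution, being careful about the overlap with $\Pi$ and with the quadric-lines through $P$, and check the total is $q^{2n}-1$.

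For $\lambda$ and $\mu$, the key structural input is the one-or-all axiom and the known intersection behaviour of $\pi^\perp$ with $\cP$ for the various subspaces $\pi$ that arise (a point, a line on the quadric, a secant line, the generator $\Pi$, and spans like $\langle P,\Pi\rangle$ or $\langle P,Q\rangle^\perp$). Given two vertices $P,Q$, I would split into the cases: (i) $P\sim_1 Q$ via a secant line $\ell=\langle P,Q\rangle$, where the common neighbours are controlled by $\ell^\perp\cap\cP$ (a $Q^+(2n-1,q)$ cone-type configuration) together with how $\ell$ and $\ell^\perp$ meet $\Pi$; (ii) $P\sim_2 Q$ via a quadric-line through a point of $\Pi$, which forces $\langle P,Q\rangle^\perp$ to be a hyperplane containing that line and one analyzes $\cP\cap\langle P,Q\rangle^\perp$; (iii) $P\not\sim Q$, the non-adjacent case, where $\langle P,Q\rangle$ is a tangent line not meeting $\Pi$, and again $\langle P,Q\rangle^\perp\cap\cP$ plus the position of $\Pi$ governs the count. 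In each case I would count neighbours of $P$ and $Q$ simultaneously by sorting the candidate common-neighbour points $R$ according to whether $\langle P,R\rangle$, $\langle Q,R\rangle$ are secant/quadric-lines/tangent and whether they meet $\Pi$, reducing everything to lattice-point counts in subquadrics and subspaces; the target values are $\lambda=q^{2n-1}(q-1)-2$ and $\mu=(q^{2n-1}+q^{n-1})(q-1)$.

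A cleaner alternative, which I would at least use as a consistency check, is to invoke that $\cG_n$ is the complement of the graph of \cite{BIK}: if that reference gives the parameters of the original graph, then $v$ is unchanged, $k$ becomes $v-1-k_{\mathrm{old}}$, and the standard complementation formulas $\lambda' = v-2-2k_{\mathrm{old}}+\mu_{\mathrm{old}}$, $\mu' = v-2k_{\mathrm{old}}+\lambda_{\mathrm{old}}$ deliver the claimed parameters immediately. However, since the paper states the goal is a \emph{direct} computation, the main body of the proof should be the counting argument above.

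The main obstacle I anticipate is case (i), the secant-adjacency case for $\lambda$: when $P\sim_1 Q$, a common neighbour $R$ can arise in several ways (forming a secant with both, a quadric-line-through-$\Pi$ with both, or a mixed pattern), and one must carefully intersect the tangent cones at $P$ and at $Q$ with each other, with the plane $\langle P,Q\rangle$, and with $\Pi$, using the one-or-all property to control the quadric-line configurations through points of $\Pi$. Keeping the inclusion–exclusion over these overlapping families correct — especially tracking the small correction terms (the $-2$ in $\lambda$ comes precisely from excluding $P$ and $Q$ themselves, and the $q^{n-1}$-type terms in $\mu$ come from the quadric-line contributions meeting $\Pi$) — is where the bookkeeping is most delicate.
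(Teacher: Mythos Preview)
Your overall strategy matches the paper's: compute $v$ trivially, compute $k$ by analysing the tangent cone $P^\perp\cap\cP$ and the lines through $P$ meeting $\Pi$, then split the computation of $\lambda$ into the two adjacency types $\sim_1$ and $\sim_2$, and finally handle $\mu$ for the non-adjacent case. The paper organises each of these counts as an inclusion--exclusion over exactly the pieces you name (points outside both tangent cones, points of $\Pi$ to be removed, and the quadric-line contributions meeting $\Pi$), and the complementation remark you suggest as a check is precisely the paper's Remark following the theorem.

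There is, however, one genuine misstatement in your case (iii). You describe the non-adjacent case as ``$\langle P,Q\rangle$ is a tangent line not meeting $\Pi$''. This cannot happen: $P$ and $Q$ are both \emph{points of the quadric}, so the line through them either meets $\cP$ in exactly $\{P,Q\}$ (a secant, giving $\sim_1$) or is entirely contained in $\cP$ (totally isotropic). A tangent line contains only one quadric point and therefore never arises here. The correct description of non-adjacency is that $\ell=\langle P,Q\rangle$ is a line \emph{contained in} $Q^+(2n+1,q)$ and \emph{disjoint from} $\Pi$. This matters for the computation: for such a totally isotropic $\ell$ one has $\ell^\perp\cap\cP$ equal to a cone with vertex $\ell$ over a $Q^+(2n-3,q)$, and $\ell^\perp\cap\Pi$ is an $(n-2)$-space, which is exactly the structural input the paper uses to obtain $\mu=(q^{2n-1}+q^{n-1})(q-1)$. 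If you proceed under the ``tangent line'' picture you will set up the wrong ambient intersection and the count will not close. Once this is corrected, your plan and the paper's proof coincide.
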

\begin{proof}
Let us start by determining the valency $k$ of $\cG_n$.

Let $P$ be a point of $Q^+(2n+1,q)\setminus\Pi$. Then $\Pi\cap P^\perp$ is an $(n-1)$-space, say $\overline{\Pi}$. Let $\Pi'$ be a $(2n-1)$-space of $P^\perp$ through $\overline{\Pi}$ and not passing through $P$, then $P^\perp\cap Q^+(2n+1,q)$ is a quadratic cone with as vertex $P$ and as basis a hyperbolic quadric $Q^+(2n-1,q)$ contained in $\Pi'$ and of which $\overline{\Pi}$ is a totally isotropic space. In order to determine the valency $k$ of $\cG_n$, we have:
\begin{itemize}
    \item to count the number of points $Q^+(2n+1,q)$ not contained in $P^\perp\cap Q^+(2n+1,q)$,
    \item to delete the number of points of $\Pi\setminus \overline{\Pi}$,
    \item to add the points of $P^\perp\cap Q^+(2n+1,q)$, different from $P$, lying on lines through $P$ intersecting the space $\overline{\Pi}$ and not contained in $\Pi'$.
\end{itemize}
Hence, we have
\begin{align*}
k&=|Q^+(2n+1,q)|-(q|Q^+(2n-1,q)|+1)-(\theta_{n}-\theta_{n-1})+(q-1)\theta_{n-1}\\
&=|Q^+(2n+1,q)|-\theta_{n}-q|Q^+(2n-1,q)|+q\theta_{n-1}-1\\
&=\frac{(q^n+1)(q^{n+1}-1)}{q-1}-\frac{q^{n+1}-1}{q-1}-q\frac{(q^{n-1}+1)(q^n-1)}{q-1}+q\frac{q^n-1}{q-1}-1=q^{2n}-1.
\end{align*}

Now we want to determine \underline{the value of $\lambda$}.

\fbox{Case I}: Let $P_1$ and $P_2$ be two points of $Q^+(2n+1,q)$ not in $\Pi$ such that the line $l:=P_1P_2$ is secant to $Q^+(2n+1,q)$. We want to determine the number of vertices of $\cG_n$ adjacent to $P_1$ and $P_2$.

Note that $|l\cap \Pi|=0$, $|l^{\perp}\cap Q^+(2n+1,q)|=|Q^+(2n-1,q)|$ and $l^\perp\cap\Pi$ is an $(n-2)$-space, say $\overline{\Pi}$, which is totally isotropic of $Q^+(2n-1,q)$.

We first show that there are no point $X$ of $\cG_n$ adjacent to $P_1$ and $P_2$ such that the lines $XP_1$ and $XP_2$ both intersect the space $\Pi$. Indeed, in such a case the line $P_1P_2$ would intersect $\Pi$, a contradiction.

The parameter $\lambda$ is given by $\lambda:=x-y+z_1+z_2$, where
\begin{align*}
    x\quad & \mbox{is the number of points $X$ of $Q^+(2n+1,q)$ such that the lines $P_1X$ and $P_2X$ are both}\\
    &\mbox{secant lines to $Q^+(2n+1,q)$}\\
    y\quad & \mbox{is the number of points of $\Pi\setminus (P_1^\perp\cup P_2^\perp)$}\\
    z_1\quad & \mbox{is the number of points of $P_1^\perp \cap Q^+(2n+1,q) \setminus (\{P_1\}\cup \Pi\cup P_2^\perp)$}\\
    z_2\quad & \mbox{is the number of points of $P_2^\perp \cap Q^+(2n+1,q) \setminus (\{P_2\}\cup\Pi\cup P_1^\perp)$}.
\end{align*}

The integer $x$ is obtained by subtracting from the number of points in $Q^+(2n+1,q)\setminus\{P_1,P_2\}$ the number of points of $l^\perp\cap Q^+(2n+1,q)$ and the size of $(P_i^\perp\cap Q^+(2n+1,q)\setminus\{P_i\})\setminus P_j^\perp$, where $i,j\in\{1,2\}$ with $i\ne j$. Then
\begin{align}\label{form:x}
x=& |Q^+(2n+1,q)|-2-|Q^+(2n-1,q)|-2(q-1)|Q^+(2n-1,q)|\nonumber\\
 =& \frac{(q^n+1)(q^{n+1}-1)}{q-1}-2+(1-2q)\frac{(q^{n-1}+1)(q^n-1)}{q-1}.
\end{align}

Since $\Pi\cap P_i^\perp$ is an $n$-space and $P_1^\perp\cap P_2^\perp=l^\perp$ which is an $(n-2)$-space, we get
\begin{equation}\label{form:y}
    y=\theta_n-q^{n-1}-q^{q^n-1}-\theta_{n-2}=q^{n-1}(q-1).
\end{equation}

    The integer $z$ is given by $(P_1^\perp\cap Q^+(2n+1,q)) \setminus (\{P_1\}\cup \Pi\cup l^\perp)$. Since $\Pi\cap P_1^\perp$ is an $(n-1)$-space, $l^\perp\subset P_1^\perp$ and $M:=l^\perp\cap\Pi$ is an $(n-2)$-space, any line of $Q^+(2n+1,q)$ through $P_1$ intersecting $M$ gives a contribution of $q-1$ points and any line of $Q^+(2n+1,q)$ through $P_1$ intersecting $\Pi$ also intersects $l^\perp$ and hence gives a contribution of $q-2$ points, we get

\begin{equation}\label{form:z}
    z_1=(q-1)\theta_{n-2}+(q-2)q^{n-1}.
\end{equation}

Arguing as above, we get $z_2=z_1$. Hence from Equations \eqref{form:x}, \eqref{form:y} and \eqref{form:z} we have
\begin{align*}
\lambda=&\frac{(q^n+1)(q^{n+1-1})}{q-1}-2+(1-2q)\frac{(q^{n-1}+1)(q^n-1)}{q-1}-q^{n-1}(q-1)+2(q-1)\frac{q^{n-1}-1}{q-1}\\
&+2(q-2)q^{n-1}=q^{2n-1}(q-1)-2.
\end{align*}

\fbox{Case II}: Let $P_1$ and $P_2$ be two points of $Q^+(2n+1,q)$ not in $\Pi$ such that the line $l:=P_1P_2$ intersects $\Pi$ at a point, say $R$, and therefore is totally isotropic. We want to determine the number of vertices of $\cG_n$ adjacent to $P_1$ and $P_2$.

Note that $|l\cap \Pi|=1$, $l^\perp\cap Q^+(2n+1,q)$ is a quadratic cone with vertex $l$ and basis a hyperbolic quadric $Q^+(2n-3,q)$. Also, $l^\perp\cap \Pi$ is an $(n-1)$-space, say $\overline{\Pi}$.

The parameter $\lambda$ is given by $\lambda:=x-y+z+t$, where
\begin{align*}
    x\quad & \mbox{is the number of points $X$ of $Q^+(2n+1,q)$ such that the lines $P_1X$ and $P_2X$ are both}\\
    &\mbox{secant lines to $Q^+(2n+1,q)$}\\
    y\quad & \mbox{is the number of points of $\Pi\setminus (P_1^\perp\cup P_2^\perp)$}\\
    z\quad & \mbox{is the number of points of $l \setminus (\{P_1,P_2,R\}$}\\
    t\quad & \mbox{is the number of points of $\langle l,\overline{\Pi}\rangle \setminus (\overline{\Pi}\cup l)$},
\end{align*}

The integer $x$ is obtained by subtracting from the number of points in $Q^+(2n+1,q)$ the number of points of $l^\perp\cap Q^+(2n+1,q)$ and the size of $(P_i^\perp\cap Q^+(2n+1,q)\setminus\{P_i\})\setminus P_j^\perp$, where $i,j\in\{1,2\}$ with $i\ne j$. Then
\begin{align}\label{form1:x}
x=& |Q^+(2n+1,q)|-q^2|Q^+(2n-3,q)|-(q+1)-2(q|Q^+(2n-1,q)|+1-(q^2|Q^+(2n-3,q)|+q+1))\nonumber\\
 =& \frac{(q^n+1)(q^{n+1}-1)}{q-1}-(q+1)+q^2\frac{(q^{n-2}+1)(q^{n-1}-1)}{q-1}-2q\frac{(q^{n-1}+1)(q^n-1)}{q-1}+2q\nonumber\\
 =&q^{2n-1}(q-1).
\end{align}

Since $\Pi\cap P_1^\perp=\Pi\cap P_2^\perp=\Pi\cap l^\perp=\overline{\Pi}$ is an $(n-1)$-space  we get
\begin{equation}\label{form1:y}
    y=\theta_n-\theta_{n-1}=q^{n}.
\end{equation}

It is clear that $z=q-2$ and since $l\cap\overline{\Pi}$ is the point $R$ we get
\[t=\theta_n-\theta_{n-1}-q=q^{n}-q.\]

 Hence, from the last two equalities and from Equations \eqref{form1:x} and \eqref{form1:y} we get
\[
\lambda=q^{2n-1}(q-1)-q^n+q-2+q^n-q=q^{2n-1}(q-1)-2.
\]

Finally, we want to determine \underline{the value of $\mu$}.

Let $P_1$ and $P_2$ be two points of $Q^+(2n+1,q)$ such that the line $l:=P_1P_2$ is totally isotropic and it is disjoint from $\Pi$. We want to determine the number of vertices of $\cG_n$ adjacent to $P_1$ and $P_2$.

Note that  $l^\perp\cap Q^+(2n+1,q)$ is a quadratic cone with vertex $l$ and basis a hyperbolic quadric $Q^+(2n-3,q)$. Also, $l^\perp\cap \Pi$ is an $(n-2)$-space, say $\overline{\Pi}$.

The parameter $\mu$ is given by $\mu:=x-y+z_1-z_2-z_3+t_1-t_2-t_3$, where
\begin{align*}
    x\quad & \mbox{is the number of points $X$ of $Q^+(2n+1,q)$ such that the lines $P_1X$ and $P_2X$ are both}\\
    &\mbox{secant lines to $Q^+(2n+1,q)$}\\
    y\quad & \mbox{is the number of points of $\Pi\setminus (P_1^\perp\cup P_2^\perp)$}\\
    z_1\quad & \mbox{is the number of points of the quadratic cone with vertex $P_1$ and basis the $(n-1)$-space $P_1^\perp\cap \Pi$}\\
    z_2\quad & \mbox{is the number of points of $P_1^\perp\cap \Pi$}\\
    z_3\quad & \mbox{is the number of points of the quadratic cone with vertex $P_1$ and basis the $(n-2)$-space $l^\perp\cap \Pi$}\\
    & \mbox{not contained in $\overline{\Pi}$}\\
    t_1\quad & \mbox{is the number of points of the quadratic cone with vertex $P_2$ and basis the $(n-1)$-space $P_2^\perp\cap \Pi$}\\
    t_2\quad & \mbox{is the number of points of $P_2^\perp\cap \Pi$}\\
    t_3\quad & \mbox{is the number of points of the quadratic cone with vertex $P_2$ and basis the $(n-2)$-space $l^\perp\cap \Pi$}\\
    & \mbox{not contained in $\overline{\Pi}$}.
\end{align*}

The integer $x$ is obtained by subtracting from the number of points in $Q^+(2n+1,q)$ the number of points of $l^\perp\cap Q^+(2n+1,q)$ and the size of $(P_i^\perp\cap Q^+(2n+1,q)\setminus\{P_i\})\setminus P_j^\perp$, where $i,j\in\{1,2\}$ with $i\ne j$. Then, as in the previous case, we get
\begin{align}\label{form2:x}
x=& |Q^+(2n+1,q)|-q^2|Q^+(2n-3,q)|-(q+1)-2(q|Q^+(2n-1,q)|+1-(q^2|Q^+(2n-3,q)|+q+1))\nonumber\\
 =& \frac{(q^n+1)(q^{n+1}-1)}{q-1}-(q+1)+q^2\frac{(q^{n-2}+1)(q^{n-1}-1)}{q-1}-2q\frac{(q^{n-1}+1)(q^n-1)}{q-1}+2q\nonumber\\
 =&q^{2n-1}(q-1).
\end{align}

Since $\Pi\cap P_1^\perp$ and $\Pi\cap P_2^\perp$ are two $(n-1)$-spaces and $\overline{\Pi}=\Pi \cap l^\perp$ is an $(n-2)$-space  we get
\begin{equation}\label{form2:y}
    y=\theta_n-2\theta_{n-1}+\theta_{n-2}=q^{n-1}(q-1).
\end{equation}

It can be easily seen that

\begin{align*}
    &z_1=t_1=q\theta_{n-1}+1,\\
    &z_2=t_2=\theta_{n-1},\\
    &z_3=t_3=(q-1)\theta_{n-2}+1,
\end{align*}

and hence $z_1-z_2-z_3=t_1-t_2-t_3=q^{n-1}(q-1)$. From these equalities and from Equations \eqref{form2:x} and \eqref{form2:y}, we get
\[\mu=q^{2n-1}(q-1)-q^{n-1}(q-1)+2q^{n-1}(q-1)=(q^{2n-1}-q^{n-1})(q-1).\]
\end{proof}

\begin{remark}\label{ihr}
    Let $\overline{\cG}_n$ be the complement of $\cG_n$. Then the vertices of $\overline{\cG}_n$ are the points of $Q^+(2n+1, q) \setminus \Pi$ and two vertices are adjacent in $\overline{\cG}_n$ whenever the line joining them is contained in $Q^+(2n+1, q)$ and skips $\Pi$. Therefore $\overline{\cG}_n$ is a (not-induced) subgraph of the collinearity graph of $Q^+(2n+1, q)$. Hence, the strong regularity was already stated in \cite{BIK}. The argument above reported involves the complementary graph and it provides a full proof of \cite[Theorem 1]{BIK} by the following identity: the complement of a $srg(v,k,\lambda,\mu)$ is strongly regular with parameters $(v,v-k-1,v-2k+\mu-2,v-2k+\lambda)$. 
 \end{remark}

\section{The graphs $NO^{+}(2n+2,2)$ and $\cG_n$ for $q=2$}\label{sec4}

Consider again the quadric $Q^{+}(2n+1,2)$ embedded in $\PG(2n+1,2)$. The graph $NO^{+}(2n+2,2)$ has vertex set the set of points of $\PG(2n+1,2)\setminus Q^{+}(2n+1,2)$,
and two vertices $P,Q$ are adjacent if and only if $P \in Q^\perp$, equivalently, if and only if the line $\langle P,Q \rangle$ is a tangent line to $Q^{+}(2n+1,2)$. 
The family of graphs $NO^{+}(2n+2,2)$ (for $n \geq 1$) is described in \cite{brvan}, they are strongly regular with parameters 
$v=2^{2n+1}-2^{n}$, $k=2^{2n}-1$, $\lambda=2^{2n-1}-2$, $\mu=2^{2n-1}+2^{n-1}$, the same parameters as the graphs $\cG_n$ for $q=2$. 
One can ask whether the graphs are isomorphic or not.
\begin{prop}
The graph $\cG_n$ is isomorphic to $NO^{+}(2n+2,2)$ if $n=0,1,2$.
\end{prop}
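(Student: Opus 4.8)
The plan is to settle $n=0,1$ by uniqueness of the graph and to concentrate the work on $n=2$. For $n=0$ there is nothing to prove: both $\cG_0$ and $NO^+(2,2)$ have a single vertex. For $n=1$, Theorem~\ref{mainth} gives that $\cG_1$ is strongly regular with parameters $(6,3,0,3)$, the same parameters as $NO^+(4,2)$; I would then use that a strongly regular graph with these parameters must be $K_{3,3}$ — indeed its complement has parameters $(6,2,1,0)$ and so is $2K_3$, which is unique — whence $\cG_1\cong NO^+(4,2)$. (A direct computation, realising $Q^+(3,2)$ as a $3\times 3$ grid and $\Pi$ as a line of one of its reguli, likewise shows $\cG_1\cong K_{3,3}$.)

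The substantive case is $n=2$. Here $\cG_2$ and $NO^+(6,2)$ are strongly regular with parameters $(28,15,6,10)$, equivalently their complements have parameters $(28,12,6,4)$, and the parameters no longer determine the graph. I would invoke the classical classification (see \cite{BH}): there are exactly four strongly regular graphs with parameters $(28,12,6,4)$, namely the triangular graph $T(8)$, whose automorphism group is $S_8$ of order $40320$, and the three Chang graphs, all of whose automorphism groups have order less than $1344$. Since a graph and its complement have the same automorphism group, it then suffices to exhibit, for each of $\cG_2$ and $NO^+(6,2)$, a group of automorphisms of order at least $1344$; each graph is thereby forced to be $\overline{T(8)}$, and hence $\cG_2\cong NO^+(6,2)$. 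For $NO^+(6,2)$ this is immediate, as $\mathrm{GO}^+_6(2)$ (order $40320$) acts on its $28$ vertices. For $\cG_2$, note that every collineation of $\PG(5,2)$ stabilising $Q^+(5,2)$ and fixing $\Pi$ is an automorphism of $\cG_2$, because adjacency was defined purely in terms of $Q^+(5,2)$ and $\Pi$; and since $\Omega^+_6(2)$ (order $20160$) is transitive on each of the two families of $15$ generators of $Q^+(5,2)$, the stabiliser of $\Pi$ in it — a maximal parabolic of order $1344$, isomorphic to $\AGL(3,2)$ — acts on the $28$ vertices of $\cG_2$, which is already enough.

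The step I expect to be the real obstacle is exactly this non-uniqueness at $n=2$: one genuinely needs both the classification of the four graphs with parameters $(28,12,6,4)$ and the gap between the order of $\mathrm{Aut}(T(8))$ and the orders of the Chang automorphism groups, in order to rule out that $\cG_2$ (or $NO^+(6,2)$) is the complement of a Chang graph. A more self-contained alternative would be an explicit isomorphism through the Klein correspondence: it identifies the vertices of $\cG_2$ with the $28$ lines of $\PG(3,2)$ missing a fixed point — two of them being adjacent in $\overline{\cG_2}$ precisely when they meet in a point and their plane avoids that fixed point — after which the exceptional isomorphism $\GL(4,2)\cong A_8$, under which the $35$ lines of $\PG(3,2)$ are the $35$ splittings of an $8$-set into two $4$-subsets, identifies $\overline{\cG_2}$ with $T(8)$. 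For $n=0,1$ this difficulty does not arise, since there the parameters already pin down the graph.
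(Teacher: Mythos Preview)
Your argument is correct. For $n=0,1$ you and the paper agree verbatim (one vertex; $K_{3,3}$). For $n=2$ the paper does not argue at all: it simply cites \cite{RS} for the isomorphism. Your route is genuinely different and more self-contained: you invoke the Chang--Hoffman classification of the four $(28,12,6,4)$ strongly regular graphs and separate $T(8)$ from the three Chang graphs by automorphism-group size, exhibiting $\mathrm{GO}^+_6(2)$ on $NO^+(6,2)$ and the parabolic of order $1344$ (the stabiliser of $\Pi$ in $\Omega^+_6(2)\cong A_8$) on $\cG_2$. Since the Chang graphs have automorphism groups of orders $384$, $360$, $96$ --- all strictly below $1344$ --- this pins both graphs to $\overline{T(8)}$. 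What your approach buys is an explicit, citable reason for the isomorphism rather than a pointer to \cite{RS}; what the paper's approach buys is brevity. Your sketched Klein-correspondence alternative (lines of $\PG(3,2)$ off a fixed point, adjacency in $\overline{\cG_2}$ being ``meet in a point whose spanning plane misses $P$'') is also sound and would give an explicit bijection rather than an existence argument, at the cost of unpacking the $A_8\cong\GL(4,2)$ dictionary.
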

\begin{proof}
For $n=0$ both graphs consist of one vertex, for $n=1$ both graphs are the complete bipartite graph $K_{3,3}$. The first
non-trivial case is $n=2$, for which it is shown in \cite{RS} that both graphs are isomorphic. 
\end{proof}

Now consider the case $n=3$. We will show that both graphs have maximal cliques of different sizes, which shows that they 
are non-isomorphic. Recall that a {\em clique} is a set of vertices in which every pair of vertices is adjacent. A clique 
is {\em maximal} if it cannot be extended to a clique of larger size. Recall the following well known theorem on the
largest possible size of cliques. 

\begin{result}[\textbf{Delsarte clique bound}, {\cite[Section 3.3.2]{Delsarte}}]\label{DB}
Let $\Gamma$ be a strongly regular graph with regularity $k$ and smallest eigenvalue $\lambda$. 
Then the size of a clique in $\Gamma$ is at most $1-\frac{k}{\lambda}$. 
\end{result}

\begin{cor}\label{cor:maxclique}
The size of a clique in $NO^+(8,2)$ and $\cG_3$ is at most 8.
\end{cor}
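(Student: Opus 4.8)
The plan is to apply the Delsarte clique bound (Result~\ref{DB}) directly. The bound is stated in terms of the regularity $k$ and the smallest eigenvalue, so the first task is to determine the spectrum of the two graphs for $n=3$, $q=2$. By the proposition just above, $\cG_3$ and $NO^+(8,2)$ are cospectral: both are strongly regular with parameters $(v,k,\lambda,\mu)=(2^7-2^3, 2^6-1, 2^5-2, 2^5+2^2)=(120,63,30,36)$, reading off from Theorem~\ref{mainth} with $n=3$, $q=2$ (equivalently from the parameter list in Section~\ref{sec4}).

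Next I would compute the eigenvalues using the formulas recalled in Section~\ref{sec2}. With $\lambda-\mu=30-36=-6$ and $k-\mu=63-36=27$, the discriminant is $(\lambda-\mu)^2+4(k-\mu)=36+108=144$, whose square root is $12$. Hence $\theta_1=\frac{1}{2}(-6+12)=3$ and $\theta_2=\frac{1}{2}(-6-12)=-9$. So the smallest eigenvalue is $-9$.

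Then the Delsarte bound gives that any clique has size at most $1-\frac{k}{\theta_2}=1-\frac{63}{-9}=1+7=8$. Since $\cG_3$ and $NO^+(8,2)$ have the same $k$ and the same smallest eigenvalue, the bound $8$ applies to both, which is exactly the assertion of the corollary. (One should note that the ``$\lambda$'' appearing in the statement of Result~\ref{DB} denotes the smallest eigenvalue, not the strongly regular parameter $\lambda$; this is a minor notational clash that the reader should keep in mind.)

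There is essentially no obstacle here: the proof is a two-line substitution once the eigenvalues are in hand, and the only mild subtlety is making sure we are quoting the bound with the correct quantity (smallest eigenvalue $=-9$, not the parameter $\lambda=30$) and that cospectrality — already established in the preceding proposition and in the discussion of $NO^+(2n+2,2)$ — lets us treat both graphs simultaneously. I would therefore write the proof as: recall the parameters, compute $\theta_2=-9$ from the quadratic formula, and conclude $1-k/\theta_2=8$ by Result~\ref{DB}.
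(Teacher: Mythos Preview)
Your proposal is correct and follows exactly the paper's approach: apply Result~\ref{DB} with $k=63$ and smallest eigenvalue $-9$ to obtain the bound $1-63/(-9)=8$. The only difference is that you spell out the eigenvalue computation from the parameters, whereas the paper simply quotes $k=63$ and $\lambda=-9$ directly.
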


\begin{proof}
Use Result \ref{DB} with $k=63$ and $\lambda=-9$.
\end{proof}

Note that a maximal clique is not necessarily a clique of largest possible size. In \cite{brvan} the maximal cliques of $NO^{+}(2n+2,2)$ 
are characterised and appear as follows. Let $\pi$ be an $n$-space of $\PG(2n+2,q)$ meeting $Q^+(2n+1,q)$ in an $(n-1)$-space. 
The $2^n$ points of $\pi \setminus Q^+(2n+1,q)$ are a clique of size $2^n$, necessarily maximal, and no other maximal cliques exist
in $NO^{+}(2n+2,2)$.

\subsection{Classification of maximal cliques of $\cG_3$}

It is a relatively easy exercise to set up the graph $\cG_3$ for $q=2$ using GAP and FinInG 
(\cite{GAP,fining}) and to obtain a complete classification of all maximal cliques of $\cG_3$. 
Furthermore, the results obtained by computer allow us to better understand the geometrical structure 
of the cliques and inspire us to give a computer free proof of the classification, including a geometrical characterisation
in terms of substructures of the quadric. We have created a github archive
containing all necessary code and results; see \cite{jdbgit25}. The computational 
approach gives rise directly to the classification results
in Table~\ref{cliqueTab}, which lists the five orbits that the automorphism group of the 
graph has on the 12777 maximal cliques. 
\begin{table}
\centering
\begin{center}
\begin{tabular}{|c|c|c|c|}
\hline
\textbf{\# of cliques} & \textbf{Size} & \textbf{Adjacencies} & \textbf{Geometric description}   \\ \hline
10752 & 5 & $\sim_1$& $Q^-(3,2)\subseteq Q^+(7,2)$ not meeting $\Pi$\\ \hline
960 & 8 & $\sim_1$ & Cameron-Praeger ovoid \cite{CP} \\ \hline
15 & 8 & $\sim_2$ & Generators of $Q^+(7,2)$ meeting $\Pi$ on a plane \\ \hline
840 & 8 & mixed & Cones $PQ^-(3,2)$, $P\in\Pi$, meeting $\Pi$ in a line \\ \hline
210 & 8 & mixed & Cones $l Q^+(1,2)$, $l\in\Pi$, meeting $\Pi$ in $l$  \\ \hline

\end{tabular}
\caption{\footnotesize{List of maximal cliques of $\cG_3$. In each case we consider all points in the geometric description \emph{out} of $\Pi$.}}\label{cliqueTab}
\end{center}
\end{table}

 Recall that the adjacency of the graph $\cG_n$ is the union of the two relations $\sim_1$ (vertices
defining a secant line) and $\sim_2$ (vertices defining a line contained in the quadric and meeting the fixed generator $\Pi$ in
a point).  Vertices in a clique of $\cG_3$ can hence be in one of the relations $\sim_1$ and $\sim_2$. If all pairs of a clique $\cC$ are in $\sim_1$, 
$\sim_2$ respectively, then we say that $\cC$ is of type $1$, respectively type $2$, otherwise we call $\cC$ of {\em mixed} type.

\begin{lemma}
    All the numbers in Table \ref{cliqueTab} are correct.
\end{lemma}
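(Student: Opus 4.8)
The plan is to treat the five rows of Table~\ref{cliqueTab} one at a time, and for each to check three things: that the configuration described, after deleting its points lying on $\Pi$, is a clique of $\cG_3$ of the stated adjacency type and size; that this clique is maximal; and that there are as many of them as the table asserts. Since $8$ is exactly the Delsarte bound of Corollary~\ref{cor:maxclique}, every size-$8$ family automatically consists of maximal cliques, so the only genuine maximality statement is the one for the $5$-cliques of row~$1$.

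The clique and size claims are short incidence arguments in $Q^+(7,2)$. An ovoid $\cO$ meets the generator $\Pi$ in exactly one point and no two of its $9$ points are collinear on the quadric, so $\cO\setminus\Pi$ is a $\sim_1$-clique of size $8$ (row~$2$). A generator of the family opposite to $\Pi$ meeting $\Pi$ in a plane $\sigma$ gives, after removing $\sigma$, a $\sim_2$-clique of size $8$: two of its points span a line of that generator, which lies on the quadric and meets $\sigma\subseteq\Pi$ (row~$3$). If $\Sigma$ is a solid with $\Sigma\cap Q^+(7,2)=Q^-(3,2)$ and $\Sigma\cap\Pi=\emptyset$, the five points of $Q^-(3,2)$ form an ovoid of $\PG(3,2)$, hence are pairwise non-collinear, hence pairwise span secants of $Q^+(7,2)$: a $\sim_1$-clique of size $5$ (row~$1$). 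A cone $PQ^-(3,2)$ with vertex $P\in\Pi$ lies on the quadric, meets $\Pi$ along one of its ruling lines through $P$, and on the remaining $8$ points two points on a common ruling line are in $\sim_2$ while two on distinct ruling lines are in $\sim_1$, because the plane they span with $P$ meets the quadric in exactly the two ruling lines through $P$ (row~$4$); and a cone $\ell Q^+(1,2)$ with a line $\ell\subseteq\Pi$ as vertex is the union of two totally isotropic planes through $\ell$, giving $8$ points off $\Pi$, two in the same plane being in $\sim_2$ and two in different planes in $\sim_1$ (row~$5$). The last two are mixed cliques of size $8$.

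The counts are then obtained by elementary counting in $Q^+(7,2)$ and its residues, using the orders of the groups $\mathrm{GO}^{\pm}_{m}(2)$, the inequality $\dim(A\cap B)\ge\dim A+\dim B-\dim V$, and the fact that $Q^-(3,2)$, being an ovoid of $\PG(3,2)$, contains no line (so an elliptic solid, and its analogues in the residues, meet any generator in at most a point). There are $|\mathrm{GO}^{+}_{8}(2)|/|\mathrm{GO}^{-}_{4}(2)|^{2}=24192$ elliptic solids in $\PG(7,2)$; each meets $\Pi$ in at most a point, a fixed point of $Q^+(7,2)$ lies on $24192\cdot5/135=896$ of them, so $24192-15\cdot896=10752$ avoid $\Pi$ (row~$1$). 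The quadric $Q^+(7,2)$ has exactly $960$ ovoids, and two distinct ones cannot share $8$ points --- otherwise at least $270-8\cdot30=30$ generators would avoid the common $8$-set, while at most $6$ could pass through the two remaining points --- so $\cO\mapsto\cO\setminus\Pi$ is injective and yields $960$ cliques (row~$2$). Each of the $15$ planes of $\Pi$ lies in a unique generator of the opposite family (row~$3$). In the residue $Q^+(5,2)$ at a point $P\in\Pi$ the cones of row~$4$ correspond to the $|\mathrm{GO}^{+}_{6}(2)|/(|\mathrm{GO}^{-}_{4}(2)|\,|\mathrm{GO}^{-}_{2}(2)|)=56$ elliptic solids there, each meeting the image of $\Pi$ in exactly one point --- so each such cone automatically meets $\Pi$ in a line --- giving $15\cdot56=840$; in the residue $Q^+(3,2)$ at a line $\ell\subseteq\Pi$ the cones of row~$5$ correspond to the secant lines, exactly $6$ of the $18$ of which are skew to the image of $\Pi$, giving $35\cdot6=210$. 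These numbers sum to $12777$, and the five families are pairwise disjoint, being separated by size and, among the size-$8$ ones, by the number of $\sim_2$-edges, which is $0$, $28$, $12$, $4$ for rows $2$, $3$, $5$, $4$.

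The step I expect to be the main obstacle is the maximality of the row-$1$ cliques. Let $\cC=Q^-(3,2)=\Sigma\cap Q^+(7,2)$ with $\Sigma\cap\Pi=\emptyset$, and suppose a vertex $X\notin\cC$ --- necessarily $X\notin\Sigma$ --- is adjacent to all five points of $\cC$. Write $X=X_\Sigma+X_\perp$ along $V=\langle\Sigma\rangle\perp\langle\Sigma\rangle^{\perp}$, with $X_\perp\ne 0$. The $4$-space $W=\langle X,\Sigma\rangle=\langle\Sigma\rangle\oplus\langle X_\perp\rangle$ meets $\Pi$ in a single point $Y$ (since $\langle\Sigma\rangle\cap\langle\Pi\rangle=0$ gives $\langle\Sigma\rangle+\langle\Pi\rangle=V$), and $X_\perp\notin\langle\Pi\rangle$ (since $\langle\Pi\rangle$ is its own $\perp$, so $\langle\Sigma\rangle^{\perp}\cap\langle\Pi\rangle=0$). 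The lines $XR$, $R\in\cC$, cannot all be secant, for then $X^{\perp}$ would meet $\Sigma$ in a plane and hence meet $Q^-(3,2)$, so some $XR$ would lie on the quadric. So $S:=\{R\in\cC:XR\subseteq Q^+(7,2)\}\ne\emptyset$; for $R\in S$ adjacency forces $Y\in XR$, which holds for at most one $R$, so $S=\{R_0\}$ has a single element. Then $X_\Sigma$ must lie on $Q^-(3,2)$, since only then does the polar plane $X_\Sigma^{\perp}\cap\Sigma$ meet $Q^-(3,2)$ in a single point; hence $R_0=[X_\Sigma]$, and the third point $[X_\perp]$ of the line $XR_0$ is different from $Y$ because $[X_\perp]\notin\Pi$. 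Therefore $XR_0$ misses $\Pi$, so $X$ is adjacent to $R_0$ neither via $\sim_1$ (the line lies on the quadric) nor via $\sim_2$ (it misses $\Pi$), a contradiction. Hence $\cC$ is maximal and Table~\ref{cliqueTab} is verified. Apart from this argument, the only points requiring care are the bookkeeping in the residue-space reductions and the input, taken from the literature, that $Q^+(7,2)$ has exactly $960$ ovoids.
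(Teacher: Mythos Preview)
Your proof is correct and, in fact, establishes more than the paper's own proof of this lemma. In the paper, the lemma is purely a counting statement: it takes for granted that the five geometric descriptions in Table~\ref{cliqueTab} yield cliques of the stated type and size, and simply verifies the numbers in the first column. The clique property and maximality are deferred to Lemmas~\ref{le:ovoid1}--\ref{le:ext2} and Theorem~\ref{main1}. Your proposal folds all of this into one argument, including a self-contained maximality proof for the $Q^-(3,2)$ cliques (anticipating Lemma~\ref{le:max5}), an injectivity check for $\cO\mapsto\cO\setminus\Pi$ that the paper leaves implicit, and a disjointness argument via the $\sim_2$-edge count.

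On the counts themselves your approach essentially matches the paper's. The one noticeable variation is row~$4$: you fix $P\in\Pi$ and observe that every elliptic solid in the residue $Q^+(5,2)$ automatically meets the image of $\Pi$ in a single point, giving $15\cdot 56=840$; the paper instead fixes the line $\overline{l}=\Pi\cap(\text{cone})$ together with a point $P\in\overline{l}$ and counts the $8$ elliptic solids through the corresponding residue point, arriving at $35\cdot 3\cdot 8=840$. Your version is slightly cleaner because it avoids the double parametrisation by $(P,\overline{l})$, but the two are equivalent.
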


\begin{proof}
 In $Q^+(7,2)$ we have 24192 elliptic sections $Q^-(3,2)$, 896 through each point. As $|\Pi|=15$, we obtain the 10752 $Q^{-}(3,2)$ not intersecting $\Pi$ as $24192-(15\cdot896)$. The number of ovoids is proven in \cite{CP}, while the 15 listed configurations of type 2 are in one-to-one correspondence to the 15 planes in $\Pi$. We now want to count mixed type configurations. Fix a line $\overline{l}\subseteq\Pi$. Then $P^\perp$ is a 6-space that meets the quadric $Q^+(7,2)$ in a cone $PQ^+(5,2)$, which intersects $\Pi$ in $\overline{l}$. The subquadrics $Q^-(3,2)\subseteq Q^+(5,2)$ correspond to the 56 lines in $\PG(5,2)$ skew to $Q^+(5,2)$. The 56 quadrics $Q^-(3,2)$ are exactly 8 on each point, and we choose the 8 quadrics through the point $\overline{l}\cap\PG(5,2)$. Then, since $\Pi$ contains 35 lines and each line contains 3 points, we get the $35\cdot3\cdot8=840$ such configurations $PQ^-(3,2)$. Now fix a line $l$ in $\Pi$, then $l^{\perp}$ is a 5-space that meets the quadric $Q^+(7,2)$ in a cone $l Q^+(3,2)$. The basis $Q^+(3,2)$ meets $\Pi$ in a line $r$. The bases $Q^+(1,2)$ needed correspond to the 6 secant lines in $Q^+(3,2)$ that do not intersect the fixed $r$. Hence, we have $210=35\cdot6$ configurations $l Q^+(1,2)$, as $\Pi$ contains 35 lines.  
\end{proof}
In the remainder of this section we will prove in a geometric way that Table \ref{cliqueTab} provides the whole classification of the cliques of $\cG_3$.

\begin{lemma}\label{le:ovoid1}
An ovoid of $Q^+(7,2)$ gives rise to a maximal clique of size $8$ of $\cG_3$.
\end{lemma}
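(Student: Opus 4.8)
Let $\cO$ be an ovoid of $Q^+(7,2)$. The plan is first to show $\cO$ is a clique of $\cG_3$ and then that it is maximal.

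For the clique property, take two distinct points $P_1,P_2\in\cO$ and consider the line $l=\langle P_1,P_2\rangle$. If $l$ were totally isotropic, then by the one-or-all axiom $l$ lies in a generator, and any generator meets $\cO$ in exactly one point, contradicting $P_1,P_2\in l\cap\cO$. Hence $l$ is a secant, so $P_1\sim_1 P_2$. We must also check that $\cO$ avoids $\Pi$: since $\Pi$ is a generator, $|\cO\cap\Pi|=1$, so removing that one point still leaves a clique — but in fact we should be slightly careful, since the vertex set of $\cG_3$ is $Q^+(7,2)\setminus\Pi$, so the natural object is $\cO\setminus\Pi$, a set of $8$ points. (The table entry "Cameron--Praeger ovoid \cite{CP}" of size $8$ is consistent with this: one takes the ovoid and discards its unique point on $\Pi$.) Thus $\cO\setminus\Pi$ is a clique of size $8$, all of whose adjacencies are of type $\sim_1$.

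For maximality, suppose some vertex $X\in Q^+(7,2)\setminus\Pi$ is adjacent to every point of $\cO\setminus\Pi$; I want a contradiction (using that $8$ is the Delsarte bound from Corollary~\ref{cor:maxclique}, any clique containing $\cO\setminus\Pi$ properly would have size $9>8$, impossible — so in fact maximality is immediate once we know $\cO\setminus\Pi$ has size exactly $8$). Indeed, by Corollary~\ref{cor:maxclique} every clique of $\cG_3$ has at most $8$ vertices, and $\cO\setminus\Pi$ already has $8$; hence it cannot be properly extended and is maximal. This is the clean way to finish, and it is the step I expect to be the real content of the lemma — the clique property is a one-line consequence of the ovoid axiom, but phrasing maximality correctly requires invoking the Delsarte bound rather than a direct combinatorial argument.

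The only genuine subtlety, and the main thing to get right, is the bookkeeping about $\Pi$: the ovoid has $9$ points, one on the fixed generator $\Pi$, and the corresponding vertex set of $\cG_3$ is the $8$-point set $\cO\setminus\Pi$. One should confirm that the unique point of $\cO\cap\Pi$ is not needed — it is not, because $\cG_3$ has no vertices on $\Pi$ — and that the remaining $8$ points are pairwise $\sim_1$-adjacent, which is exactly the argument above. I would also remark, as the table does, that these size-$8$ cliques realize the Delsarte bound and therefore constitute maximum cliques of $\cG_3$, which is precisely the feature distinguishing $\cG_3$ from $NO^+(8,2)$, whose maximum cliques have size $2^3=8$ as well but arise from a different substructure; the non-isomorphism will follow from comparing the full list of maximal-clique orbits, not from the maximum size alone.
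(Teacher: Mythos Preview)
Your proof is correct and follows essentially the same approach as the paper: show that any two points of $\cO$ span a secant (hence are $\sim_1$-adjacent), note that $\cO$ meets $\Pi$ in a unique point so $\cO\setminus\Pi$ has size $8$, and conclude maximality directly from the Delsarte bound in Corollary~\ref{cor:maxclique}. The paper's version is terser (it does not spell out the argument that a totally isotropic line through two ovoid points would contradict the ovoid property), but the logic is identical.
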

\begin{proof}
Let $\cO$ be an ovoid of $Q^+(7,2)$. Then every pair of points $P,Q \in \cO$ determines necessarily a secant to $Q^+(7,2)$. 
The ovoid $\cO$ meets the generator $S$ in a unique point, hence $\cO$ contains exactly $8$ vertices that are mutually in relation $\sim_1$. 
This is a clique of size $8$, the largest possible size by Corollary~\ref{cor:maxclique}.
\end{proof}

A {\em partial ovoid} of $Q^+(2n+1,q)$ is a set $\cB$ of points such that no two points of $\cB$ are collinear on the quadric. A partial ovoid
is {\em maximal} if it cannot be extended to a larger partial ovoid. Obviously, an ovoid is a maximal partial ovoid, any subset of an ovoid is a 
partial ovoid which is not maximal. 

\begin{lemma}\label{le:ovoid2}
A partial ovoid of $Q^+(7,2)$ gives rise to a clique in the relation $\sim_1$. 
\end{lemma}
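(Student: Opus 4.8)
The plan is to reduce the statement to an elementary observation about lines over $\GF(2)$. First I would record that the fixed generator $\Pi$ is totally isotropic, so any two of its points are collinear on $Q^+(7,2)$; consequently a partial ovoid $\cB$ of $Q^+(7,2)$ contains at most one point of $\Pi$. Hence $\cB':=\cB\setminus\Pi$ is a set of vertices of $\cG_3$, and, being a subset of $\cB$, it is again a partial ovoid. It therefore suffices to show that any two distinct points of $\cB'$ are adjacent via $\sim_1$.

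Next I would take distinct $P_1,P_2\in\cB'$ and analyse the line $\ell=\langle P_1,P_2\rangle$. By the definition of partial ovoid, $P_1$ and $P_2$ are not collinear on $Q^+(7,2)$, so $\ell$ is not contained in the quadric. Over $\GF(2)$ the line $\ell$ has exactly three points; it already meets $Q^+(7,2)$ in $P_1$ and $P_2$, and its third point cannot lie on the quadric, since otherwise $\ell$ would meet the quadric in all three of its points and hence be contained in it, contradicting the previous sentence. (Equivalently, writing $\ell=\{\langle a\rangle,\langle b\rangle,\langle a+b\rangle\}$ with $Q(a)=Q(b)=0$, one has $Q(a+b)=f(a,b)$ for the associated bilinear form $f$, which is nonzero precisely because $P_1,P_2$ are not collinear on the quadric.) Therefore $\ell$ is a secant to $Q^+(7,2)$, i.e.\ $P_1\sim_1 P_2$.

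Since this holds for every pair of points of $\cB'$, the set $\cB'$ is a clique of $\cG_3$ in which every adjacency is realised by $\sim_1$, that is, a clique of type $1$, of size $|\cB|$ or $|\cB|-1$ according to whether $\cB$ misses or meets $\Pi$. There is no real obstacle here: the usual trichotomy for a line meeting a quadric in at least two points collapses, over $\GF(2)$, to the dichotomy ``secant or contained'', and the only minor bookkeeping point is the single possible point of $\cB\cap\Pi$, which is discarded when passing to $\cB'$.
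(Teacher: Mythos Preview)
Your argument is correct and is essentially the same as the paper's, which simply refers back to the proof of the ovoid case: a partial ovoid meets $\Pi$ in at most one point, and any two of its points span a secant (over $\GF(2)$, a line through two non-collinear quadric points is automatically secant). You have merely spelled out in more detail what the paper's one-line ``Similar as the proof of Lemma~\ref{le:ovoid1}'' is implicitly invoking.
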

\begin{proof}
Similar to the proof of Lemma~\ref{le:ovoid1}.
\end{proof}

It is not a priori clear whether cliques from maximal partial ovoids are maximal or can be extended by 
adding vertices that are in relation $\sim_2$ with the vertices of the initial partial ovoid. However, we can easily construct cliques of size $5$ in $\sim_1$.

\begin{lemma}\label{le:elliptic1}
An elliptic quadric $Q^-(3,q)$ contained in $Q^+(7,q)$ is a maximal partial ovoid of $Q^+(7,q)$. 
\end{lemma}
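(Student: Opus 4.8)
The plan is to check the two defining conditions of a maximal partial ovoid directly. Write $\Sigma=\langle Q^-(3,q)\rangle$ for the $3$-space spanned by the elliptic quadric, so that $Q^-(3,q)=Q^+(7,q)\cap\Sigma$, the restriction of the defining quadratic form to $\Sigma$ is non-degenerate of elliptic type, and in particular $Q^-(3,q)$ contains no line. I will use the standard fact that a line joining two points $P,R$ of $Q^+(7,q)$ lies on the quadric if and only if $R\in P^\perp$.

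First I would verify that $Q^-(3,q)$ is a partial ovoid. If two distinct points $P,R$ of $Q^-(3,q)$ spanned a line contained in $Q^+(7,q)$, then this line would also be contained in $\Sigma$, hence in $Q^+(7,q)\cap\Sigma=Q^-(3,q)$; this is impossible, since an elliptic quadric contains no line. So any two points of $Q^-(3,q)$ span a secant, which is precisely the partial ovoid condition (and, for $q=2$, gives a clique in the relation $\sim_1$ by Lemma~\ref{le:ovoid2}).

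For maximality, I would take an arbitrary point $P\in Q^+(7,q)\setminus Q^-(3,q)$ and show it is collinear on the quadric with some point of $Q^-(3,q)$. The tangent hyperplane $P^\perp$ is a $6$-space of $\PG(7,q)$, so $P^\perp\cap\Sigma$ has projective dimension at least $2$; hence it is either a plane of $\Sigma$ or all of $\Sigma$, and in either case it meets $Q^-(3,q)$, because every plane section of $Q^-(3,q)$ is a non-empty conic. Choosing $R\in P^\perp\cap Q^-(3,q)$ we have $R\neq P$, both $P$ and $R$ lie on $Q^+(7,q)$, and $R\in P^\perp$, so the line $\langle P,R\rangle$ is totally isotropic; thus $P$ is collinear with $R\in Q^-(3,q)$ on the quadric. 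Consequently $Q^-(3,q)\cup\{P\}$ is not a partial ovoid, and since $P$ was arbitrary, $Q^-(3,q)$ is maximal.

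The argument is essentially a dimension count, so I do not expect a real obstacle; the only step that deserves a word of justification is the non-emptiness of the plane sections of $Q^-(3,q)$, which follows either from the classification of plane sections of an ovoid of $\PG(3,q)$ (tangent planes meet it in a point, every other plane in an oval) or, more elementarily, from the fact that no conic of $\PG(2,q)$ is empty.
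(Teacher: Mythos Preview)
Your proof is correct and follows essentially the same approach as the paper: show that $Q^-(3,q)$ contains no line (hence is a partial ovoid), then for any external point $P$ use a dimension count to see that $P^\perp\cap\Sigma$ has dimension at least $2$ and therefore meets $Q^-(3,q)$ non-trivially, giving a point of $Q^-(3,q)$ collinear with $P$. You are in fact slightly more careful than the paper in allowing for the possibility $\Sigma\subseteq P^\perp$ (which occurs when $P\in\Sigma^\perp$), whereas the paper simply asserts that $P^\perp\cap\pi_3$ is a plane.
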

\begin{proof}
Assume that some solid $\pi_3$ of $\PG(7,q)$ meets $Q^+(7,q)$ in an elliptic quadric $Q^-(3,q)$. The points 
of $Q^-(3,q)$ are a partial ovoid of $Q^+(7,q)$. Now, let $P \in Q^+(7,q) \setminus \pi_3$. Then $P^\perp$
meets $\pi_3$ in a plane, which meets $Q^-(3,q)$ either in a point or in a conic. Hence, at least one point
of $Q^-(3,q)$ is collinear in $Q^+(7,q)$ with $P$, so no point of $Q^+(7,q) \setminus Q^-(3,q)$ can be 
added to extend $Q^-(3,q)$ as a partial ovoid.
\end{proof}

\begin{lemma}\label{le:max5}
Let $q=2$ and let $Q^-(3,q)$ be an elliptic quadric contained in $Q^+(7,q)$, 
not meeting $\Pi$. Then the points of $Q^-(3,q)$ are a maximal clique of size $5$.
\end{lemma}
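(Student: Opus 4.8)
The plan is to verify first that the five points of $Q^-(3,2)$ form a clique of $\cG_3$ all of whose pairs lie in $\sim_1$, and then to show that no vertex outside it is adjacent to all five. Write $\pi_3$ for the solid with $\pi_3\cap Q^+(7,2)=Q^-(3,2)$; since $\Pi\subseteq Q^+(7,2)$ and $\Pi\cap Q^-(3,2)=\emptyset$, we get $\Pi\cap\pi_3=\emptyset$. As $Q^-(3,2)$ has five points and is a maximal partial ovoid of $Q^+(7,2)$ by Lemma~\ref{le:elliptic1}, Lemma~\ref{le:ovoid2} gives that these five (pairwise non-collinear, hence pairwise secant) points form a clique in $\sim_1$, and all five are vertices because $Q^-(3,2)$ misses $\Pi$.

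The key structural remark I would isolate at the outset is a disjointness statement: from $\Pi\cap\pi_3=\emptyset$ one has $\langle\Pi,\pi_3\rangle=\PG(7,2)$, hence $\Pi^\perp\cap\pi_3^\perp=\emptyset$, and since $\Pi$ is a generator $\Pi^\perp=\Pi$; so $\Pi$ is disjoint from \emph{both} $\pi_3$ and $\pi_3^\perp$. Now fix any vertex $P\in Q^+(7,2)\setminus(\Pi\cup Q^-(3,2))$ and set $C:=P^\perp\cap Q^-(3,2)$. For $X\in C$ the points $P,X$ are collinear in the polar space, so $\langle P,X\rangle=\{P,X,P+X\}$ is a line of $Q^+(7,2)$; as $P,X\notin\Pi$, this line meets $\Pi$ exactly when $P+X\in\Pi$. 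Hence $P$ is non-adjacent to $X\in C$ precisely when $P+X\notin\Pi$, while $P$ is automatically adjacent via $\sim_1$ to every point of $Q^-(3,2)\setminus C$.

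The first main step is to observe that $P+X\in\Pi$ can hold for at most one $X\in C$: if $X\neq X'$ in $C$ both had $P+X,P+X'\in\Pi$, then $X+X'=(P+X)+(P+X')$ would lie in $\Pi\cap\pi_3$, which is impossible since $X+X'\neq 0$. Consequently, if $|C|\geq 2$ some $X\in C$ is non-adjacent to $P$ and we are done; and $C\neq\emptyset$ because $Q^-(3,2)$ is a maximal partial ovoid. So the only remaining case is $C=\{X\}$. There $P\notin\pi_3^\perp$ (otherwise $\pi_3\subseteq P^\perp$ and $C=Q^-(3,2)$), so $P^\perp\cap\pi_3$ is a plane, and it meets $Q^-(3,2)$ only in $X$; being a plane of $\pi_3$ through $X$ meeting the ovoid $Q^-(3,2)$ in the single point $X$, it must be the tangent plane $X^\perp\cap\pi_3$. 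Thus the two linear forms $y\mapsto f(y,P)$ and $y\mapsto f(y,X)$ on $\pi_3$ have the same hyperplane kernel $P^\perp\cap\pi_3=X^\perp\cap\pi_3$, so over $\GF(2)$ they are equal, whence $f(y,P+X)=0$ for all $y\in\pi_3$, i.e. $P+X\in\pi_3^\perp$. Since $\Pi\cap\pi_3^\perp=\emptyset$, we conclude $P+X\notin\Pi$, so $P$ is not adjacent to $X$.

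In every case $P$ misses some point of $Q^-(3,2)$, so the clique cannot be extended, proving maximality. I expect the only slightly delicate point to be the identification $P^\perp\cap\pi_3=X^\perp\cap\pi_3$ in the last case — recalling that on an ovoid in $\PG(3,2)$ there is a unique plane through a given point meeting it only there, namely the polar plane $X^\perp\cap\pi_3$ — together with the small bookkeeping that forces $P\notin\pi_3^\perp$ when $|C|=1$; everything else reduces to the two disjointness facts $\Pi\cap\pi_3=\emptyset$ and $\Pi\cap\pi_3^\perp=\emptyset$ and to counting the three points of a line over $\GF(2)$.
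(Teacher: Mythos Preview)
Your argument is correct. The two key ingredients you isolate — that $\Pi\cap\pi_3=\emptyset$ forces $\Pi\cap\pi_3^\perp=\emptyset$, and that over $\GF(2)$ a plane of $\pi_3$ meeting $Q^-(3,2)$ in a single point $X$ must be the tangent plane $X^\perp\cap\pi_3$ — are exactly what is needed, and the bilinear-form step showing $P+X\in\pi_3^\perp$ when $|C|=1$ is clean and valid.

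Your route differs from the paper's. The paper argues synthetically: if a vertex $T$ extended the clique, then (since $Q^-(3,2)$ is a maximal partial ovoid) some adjacency $T\sim_2 P$ occurs, producing a point $S\in\Pi$ on the line $\langle P,T\rangle$; it then asserts that the $4$-space $\langle T,\pi_3\rangle$ meets $Q^+(7,2)$ in a cone $S\,Q^-(3,2)$, whose base is an ovoid of the residual $Q^+(5,2)$ at $S$, forcing $\Pi$ to contain a full line of the cone and hence to meet $Q^-(3,2)$. Your proof instead avoids the cone picture entirely, replacing it by a direct case split on $|C|=|P^\perp\cap Q^-(3,2)|$ together with the perp-duality fact $\Pi\cap\pi_3^\perp=\emptyset$. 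This is more elementary and fully self-contained; in particular, your treatment of the $|C|=1$ case (deducing $P+X\in\pi_3^\perp$ from equality of linear forms on $\pi_3$) makes explicit precisely the point the paper's cone assertion is using implicitly, namely that the relevant third point lands in $\pi_3^\perp$. The trade-off is that the paper's picture, once the cone structure is established, gives a single geometric reason for the contradiction, whereas your argument splits into the $|C|\ge 2$ and $|C|=1$ sub-cases; both are short.
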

\begin{proof}
By Lemma~\ref{le:elliptic1} the points of $Q^-(3,q)$ are a maximal partial
ovoid of size $5$ contained in the vertex set of $\cG_3$, and hence by Lemma~\ref{le:ovoid2}
this gives a clique of size $5$ in relation $\sim_1$. Assume that a vertex $T$ extends 
$Q^-(3,q)$ to a larger clique. It is not possible that $T$ 
extends this clique to a larger clique in $\sim_1$, since this would extend $Q^-(3,q)$ to a larger
partial ovoid, a contradiction by the maximality of $Q^-(3,q)$ as partial ovoid. Hence 
$T \sim_2 P$ for some point $P \in Q^-(3,q)$. But then the line $\langle P,T \rangle$ meets 
$\Pi$ in a point $S$, hence the space $\langle T,Q^-(3,q) \rangle$ meets $Q^+(7,q)$ 
in a cone $S Q^-(3,q)$. Now the base of this cone is an ovoid of the base of the cone 
$S^\perp \cap Q^+(7,q) = S Q^+(5,q)$, so $\Pi$ contains exactly one line of the cone $SQ^-(3,q)$,
and hence $Q^-(3,q)$ meets $\Pi$, a contradiction. So, the points of $Q^-(3,q)$ are a maximal clique 
of $\cG_3$
\end{proof}

Before dealing with cliques of mixed type, we give a final lemma proving that small partial ovoids
determine either an elliptic quadric (hence a maximal partial ovoid of size $5$) or a complete
ovoid in a unique way. 

\begin{lemma}\label{le:4pts}
Let $q=2$. A partial ovoid $\cP$ of $Q^+(7,q)$ of size $4$ can be extended in a unique way to an ovoid.
\end{lemma}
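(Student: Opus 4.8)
The plan is to first pin down, by pure geometry, how $\cP$ sits inside $Q^+(7,2)$, and then to finish by a short counting argument based on the known number of ovoids.

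\textbf{Step 1: the solid spanned by $\cP$ meets $Q^+(7,2)$ in an elliptic quadric.} Write $\cP=\{P_1,P_2,P_3,P_4\}$. Since in characteristic $2$ the polar form $f$ is alternating and no two of the $P_i$ are collinear, the Gram matrix $(f(P_i,P_j))_{i,j}$ equals $J+I$ over $\GF(2)$, which is non-singular; hence $P_1,\dots,P_4$ are linearly independent, they span a solid $\Sigma$, and $f|_\Sigma$ is non-degenerate. (Equivalently: $\cP$ cannot be coplanar, since a partial ovoid contained in a plane has at most three points, that plane meeting $Q^+(7,2)$ in a conic.) Two points of $\Sigma\cap Q^+(7,2)$ are collinear on $Q^+(7,2)$ precisely when they are collinear in the quadric $\Sigma\cap Q^+(7,2)$ of $\PG(3,2)$, so I would run through the short list of quadrics of $\PG(3,2)$: each one other than $Q^-(3,2)$ either contains a line or is a cone with a vertex collinear to all of its points, hence has no partial ovoid of size $4$, whereas $Q^-(3,2)$ has exactly five points and they are pairwise non-collinear. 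Thus $\Sigma\cap Q^+(7,2)=Q^-(3,2)$, and inside $\Sigma$ the set $\cP$ extends in exactly one way to this elliptic quadric (the fifth point being $\langle P_1+P_2+P_3+P_4\rangle$). By Lemma~\ref{le:elliptic1} this $Q^-(3,2)$ is a maximal partial ovoid, of size $5$; in particular it lies in no ovoid, so any ovoid containing $\cP$ must use points outside $\Sigma$.

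\textbf{Step 2: transitivity on partial ovoids of size $4$, and existence of an ovoid containing $\cP$.} By Witt's theorem the stabiliser of $Q^+(7,2)$ in $\PGL(8,2)$ is transitive on its elliptic sections $Q^-(3,2)$, and the stabiliser of such a section induces on its five points the full group $\mathrm{GO}^-_4(2)\cong S_5$, hence is $4$-transitive on them. Combined with Step 1 (every partial ovoid of size $4$ lies in a unique elliptic section), this gives transitivity of the automorphism group of $Q^+(7,2)$ on ordered partial ovoids of size $4$. Since ovoids of $Q^+(7,2)$ exist by \cite{CP} (and for $q=2$ there is a unique one up to isomorphism) and any four points of an ovoid form a partial ovoid of size $4$, transitivity then forces every partial ovoid of size $4$ to be contained in at least one ovoid.

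\textbf{Step 3: uniqueness, by counting.} Selecting the points of a size-$4$ partial ovoid one at a time, the next point must avoid $P_1^\perp\cup\dots\cup P_j^\perp$; using $|P_i^\perp\cap Q^+(7,2)|=71$, $|\langle P_i,P_j\rangle^\perp\cap Q^+(7,2)|=35$ and $|\langle P_i,P_j,P_k\rangle^\perp\cap Q^+(7,2)|=15$, inclusion--exclusion yields $135\cdot 64\cdot 28\cdot 12/4!=120960$ partial ovoids of size $4$. On the other hand $Q^+(7,2)$ has $960$ ovoids by \cite{CP}, each containing $\binom{9}{4}=126$ partial ovoids of size $4$, so there are $960\cdot 126=120960$ incident pairs (partial ovoid of size $4$, ovoid through it). By Step 2 each partial ovoid of size $4$ lies in the same number $t$ of ovoids, whence $120960\,t=120960$, i.e.\ $t=1$, as required. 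I expect the main obstacle to be Step 2, namely verifying carefully that the stabiliser of an elliptic section inside the orthogonal group of $Q^+(7,2)$ induces the full $S_5$ on its five points (so that transitivity on size-$4$ partial ovoids really does follow from Witt's theorem), together with keeping the counts of ovoids and of size-$4$ partial ovoids compatible. A more hands-on alternative would be to use an explicit coordinate model of the Cameron-Praeger ovoid and check directly that the elliptic quadric of Step 1 forces it; note also that Step 1 by itself already yields the elliptic-quadric version of the statement.
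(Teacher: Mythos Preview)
Your proof is correct, but it is genuinely different from the paper's. The paper argues directly and constructively: after the same step-by-step count $135,64,28,12$ it continues one step further to find that exactly $6$ points of $Q^+(7,2)$ are non-collinear with all of $P_1,\dots,P_4$, removes the fifth point of the elliptic quadric $Q^-(3,2)=\langle P_1,\dots,P_4\rangle\cap Q^+(7,2)$ (which by Lemma~\ref{le:elliptic1} would only lead to a maximal partial ovoid), and then proves by a short geometric argument that the remaining $5$ points are pairwise non-collinear, so that together with $\cP$ they form the unique extending ovoid. Your route instead uses Witt's theorem and the isomorphism $\mathrm{GO}^-_4(2)\cong S_5$ to get transitivity on size-$4$ partial ovoids, and then a double count against the figure $960$ from \cite{CP}. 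The trade-off is clear: the paper's proof is self-contained and actually exhibits the extending ovoid, whereas yours is shorter but imports both a group-theoretic fact and the exact global count of ovoids. Your worry about Step~2 is easily dispatched: since $\Sigma$ is non-degenerate one has $V=\Sigma\perp\Sigma^\perp$ with $\Sigma^\perp$ also of minus type (because $O_4^-\perp O_4^-$ has plus type), so the stabiliser of $\Sigma$ in $\mathrm{GO}^+_8(2)$ contains $\mathrm{GO}^-_4(2)\times\mathrm{GO}^-_4(2)$ and in particular induces the full $S_5$ on the five points of $Q^-(3,2)$.
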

\begin{proof}
Let $\cP = \{P_1, P_2, P_3, P_4\}$. We show that these four points determine uniquely 5 
points extending the partial ovoid to an ovoid. Define the following sets for $j \in \{2,3,4\}$ and $1 \leq i \leq j-1$
\[ 
A_{i}^j := P_j^\perp \cap P_i^\perp \cap Q^+(7,q) = \langle P_i,P_j\rangle^\perp \cap Q^+(7,q)\,.
\]
Hence, the set $A_{i}^j$ is the set of points collinear with both $P_i$ and $P_j$. 

If we make a choice for a point $P_j$, the next point $P_{j+1}$ cannot be collinear with $P_j$. Hence, from the list of possible 
points to extend the partial ovoid to an ovoid, we always have to remove the points collinear with $P_j$. Clearly, we will have
 already removed points collinear with $P_i$, $1 \leq i \leq j-1$. 

Now consider $P_1$. The points collinear with $P_1$ (including $P_1$) are the 
points of $P_1^\perp \cap Q^+(7,q) = P_1 Q^+(5,q)$. Hence, there are $135 - 71 = 64$ possible choices for $P_2$.
Now consider $P_2$. We have to remove the points of $P_2 Q^+(5,q)$. The line $l = \langle P_1,P_2 \rangle$ is a projective line of hyperbolic type, so $l^\perp  \cap Q^+(7,q) = Q^+(5,q)$, hence $|A_1^2| = 35$, and these points have been removed already 
after choosing $P_1$. So in total, we have to remove $71-35=36$ points. This leaves $64 - 36 = 28$ choices for $P_3$.
Now consider $P_3$. The plane $\langle P_1,P_2,P_3 \rangle$ is of parabolic type, since none of the three points 
$P_1, P_2, P_3$ are collinear. Therefore, $\pi = \langle P_1,P_2,P_3 \rangle^\perp$ is of parabolic type, i.e. 
$\pi \cap Q^+(7,q) = Q(4,q) = A_1^3 \cap A_2^3$. The number of points already removed after choosing 
$P_1$ and $P_2$ equals $|A_1^3 \cup A_2^3| = |A_1^3| + |A_2^3| - |A_1^3\cap A_2^3| = 35+35-15 = 55$.
So in total, we have to remove $71-55=16$ points. This leaves $28 - 16 = 12$ choices for $P_4$. 
The solid $\pi_3 := \langle P_1,P_2,P_3,P_4 \rangle$ is of elliptic type, so $\pi_3  \cap Q^+(7,q) = Q^-(3,q) = 
A_1^4 \cap A_2^4 \cap A_3^4$. The number of points already removed after choosing 
$P_1$, $P_2$, and $P_3$ equals $|A_1^4 \cup A_2^4 \cup A_3^4| = |A_1^4| + |A_2^4| + |A_3^4| - |A_1^4\cap A_2^4| - 
|A_1^4\cap A_3^4| - |A_2^4 \cap A_3^4| + |A_1^4 \cap A_2^4 \cap A_3^4| = 35+35+35-15-15-15+5 = 65$.
So in total, we have to remove $71-65=6$ points. This leaves a set $\cR$ of $12 - 6 = 6$ remaining points. 
The solid $\pi_3$ intersects $Q^+(7,q)$ in an elliptic quadric $Q^-(3,q)$. Its unique point different from $P_1,\ldots,P_4$ 
belongs to $\cR$. If we add this unique point to the partial ovoid, then it becomes a maximal partial ovoid by Lemma~\ref{le:elliptic1}.
Hence, if we want to extend $\cS$ to an ovoid, we can remove this unique point from $\cR$. Then $\cR$ is a set of $5$ points. 

Now assume that two points $S_1, S_2 \in \cR$ are collinear. The line $m = \langle S_1,S_2 \rangle$ is contained in $Q^+(7,q)$. 
But this line cannot be contained in $P_i^\perp$ for $i \in \{1,2,3,4\}$. So, necessarily, $P_i^\perp$ meets $m$ in a point not in $\cR$,
$m \setminus \cR = \{R\}$. This means that $P_i \in r^\perp$, $i \in \{1,2,3,4\}$. Now $r^\perp \cap Q^+(7,q) = RQ^+(5,q)$. The four points $P_i$, $i \in \{1,2,3,4\}$
are necessarily projected from $R$ onto a partial ovoid $\cR'$ of $Q^+(5,q)$. Hence, they span a solid $\pi_3$ meeting this $Q^+(5,q)$ in an elliptic quadric $Q^-(3,q)$. 
The line $m$ meets $Q^+(5,q)$ in a point, w.l.o.g. we may assume this is $S_1$. Then $S_1^\perp \cap Q^-(3,q)$ is necessarily a conic. Since $Q^-(3,q)$ contains
$5$ points in total, of which only one is not in $\cR'$, $S_1^\perp$ must contain at least one point of $\cR'$. Hence, there exists at least one plane of $Q^+(7,q)$ 
through the line $m$ containing one of the points $P_i$, a contradiction since none of these points is collinear with $S_1$ and $S_2$. We conclude
that no two points of $\cR$ are collinear. So, the set $\cR \cup \{P_1,P_2,P_3,P_4\}$ is an ovoid of $Q^+(7,q)$. 
\end{proof}

The strategy to classify maximal cliques in $\cG_3$ for $q=2$ will be to characterise very 
small cliques of mixed type in a geometric way. This will restrict the possibilities to extend small cliques to 
larger cliques. 

\begin{lemma}\label{triangle}
If $\cC = \{P,Q,R\}$ is a clique of mixed type in $\cG_3$, then there are exactly two $\sim_1$-adjacencies and one $\sim_2$-adjacency, and $\cC$ spans
a plane meeting $\Pi$ in a point $S$.
\end{lemma}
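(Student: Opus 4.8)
The plan is to analyze the three possible adjacency patterns on a triple $\cC=\{P,Q,R\}$ of mixed type and rule out all but the one claimed. A mixed-type triangle cannot have zero $\sim_2$-adjacencies (it would be type 1) nor three $\sim_2$-adjacencies (type 2), so the options are exactly one or two $\sim_2$-adjacencies. The first step is to show that two $\sim_2$-adjacencies is impossible. Suppose, say, $P\sim_2 Q$ and $P\sim_2 R$. Then the lines $\langle P,Q\rangle$ and $\langle P,R\rangle$ are totally singular lines through $P$, each meeting $\Pi$ in a point; in particular $Q,R\in P^\perp$, so the whole plane $\langle P,Q,R\rangle$ lies in $P^\perp$. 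Since $Q$ and $R$ are both collinear with $P$ on the quadric, by the one-or-all axiom the plane $\langle P,Q,R\rangle$ is totally singular, which forces $Q\sim R$ to be a $\sim_2$-adjacency provided the line $\langle Q,R\rangle$ meets $\Pi$; but even granting $Q\sim_1 R$ is impossible here (the line $\langle Q,R\rangle$ lies on the quadric, hence is not secant), so all three pairs are totally singular and $\cC$ is of type $2$ after checking the $\Pi$-condition — contradicting mixedness. The mild subtlety is confirming that in the "two $\sim_2$" configuration the third pair really is a $\sim_2$-adjacency and not simply a non-edge; this follows because $\langle P,Q\rangle$ and $\langle P,R\rangle$ each meet the generator $\Pi$, and two distinct points of the totally singular plane $\langle P,Q,R\rangle$ lie in $\Pi$, so $\langle P,Q,R\rangle\cap\Pi$ is at least a line, whence every line of the plane — in particular $\langle Q,R\rangle$ — meets $\Pi$.

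Having excluded that case, exactly one pair, say $P\sim_2 Q$, is a $\sim_2$-adjacency and the other two pairs $P\sim_1 R$, $Q\sim_1 R$ are secants. It remains to locate the plane $\langle P,Q,R\rangle$. The line $l:=\langle P,Q\rangle$ is totally singular and meets $\Pi$ in a point $S$. Consider the plane $\pi:=\langle P,Q,R\rangle=\langle l,R\rangle$; since $R\notin l$ and $R$ is not collinear on the quadric with both $P$ and $Q$ (otherwise, by the one-or-all axiom applied to $R$ and $l$, $\pi$ would be totally singular and $\langle P,R\rangle$ could not be secant), $\pi$ is not totally singular. Intersecting $\pi$ with the quadric: it contains the singular line $l$ and the extra point $R$, and the two secants $\langle P,R\rangle$, $\langle Q,R\rangle$ show $\pi\cap Q^+(7,2)$ consists of $l$ together with at most one further line; a quick count of the singular points in $\pi$ (the two secants contribute $P,Q,R$ and nothing more on each, so exactly the three points of $l$ plus $R$, i.e. four singular points) forces $\pi\cap Q^+(7,2)$ to be a pair of lines through a common point, namely $l$ and a second singular line $m$ through $R$. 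The radical point of this line-pair is $l\cap m$. Finally, since $S\in l\subseteq \pi$ and $S\in\Pi$, the plane $\pi$ meets $\Pi$ at least in the point $S$; and $\pi$ cannot meet $\Pi$ in a line, because such a line would be totally singular and would meet $m$, producing a second point of $Q^+(7,2)$ on $\langle R,\,\cdot\rangle$ collinear with $R$ outside $l$ — contradicting $R\sim_1 P$, $R\sim_1 Q$. Hence $\pi\cap\Pi$ is exactly the point $S$, as claimed.

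The main obstacle is the bookkeeping in the second paragraph: pinning down $\pi\cap Q^+(7,2)$ precisely (a pair of lines rather than a single line, a conic, or more) and then showing $\pi\cap\Pi$ cannot grow to a line. Both reduce to careful applications of the one-or-all axiom and to the fact that $P$ and $Q$ are \emph{not} collinear with $R$ on the quadric; I would organize this as: (i) $\pi$ not totally singular; (ii) $\pi\cap Q^+(7,2)$ contains the singular line $l$, hence is a cone, hence (being a plane section) is a line-pair or a line; (iii) the presence of the non-collinear point $R$ with two secants to $l$ rules out the single-line case, giving a line-pair $l\cup m$; (iv) $S\in\pi\cap\Pi$; (v) $\pi\cap\Pi$ is totally singular, so if it were a line it would lie in the line-pair $l\cup m$ and meet $m$, contradicting the secancy of $\langle P,R\rangle$ and $\langle Q,R\rangle$. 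Step (iii) is where one must be most careful, as it is the only place the mixed-type hypothesis (specifically, that $R$ forms secants with both $P$ and $Q$) is genuinely used.
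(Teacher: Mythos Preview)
Your overall strategy is sound, but there is a genuine gap in the first step. You assert that from $P\sim_2 Q$ and $P\sim_2 R$ ``by the one-or-all axiom the plane $\langle P,Q,R\rangle$ is totally singular''. That is a misapplication: the axiom applies to a point and a \emph{totally singular} line, and says the point is collinear with one or all points of that line. Applying it to, say, $R$ and the singular line $\langle P,Q\rangle$ tells you only that $R$ is collinear with one or all points of $\langle P,Q\rangle$; you know $R$ is collinear with $P$, but nothing forces $R$ to be collinear with $Q$. A~priori the plane could meet the quadric in just the pencil $\langle P,Q\rangle\cup\langle P,R\rangle$, in which case $\langle Q,R\rangle$ would honestly be secant and your chain ``$\langle Q,R\rangle$ lies on the quadric, hence is not secant'' breaks down.

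The good news is that your own $\Pi$-intersection observation already fixes this and makes the total-singularity claim unnecessary. The two points $\langle P,Q\rangle\cap\Pi$ and $\langle P,R\rangle\cap\Pi$ are distinct (otherwise the two lines would coincide, since both pass through $P\notin\Pi$), so the plane meets $\Pi$ in a line $m$. Then the remaining edge $\langle Q,R\rangle$, which by hypothesis is the $\sim_1$-edge and hence secant, meets $m\subseteq\Pi\subseteq Q^+(7,q)$ in a quadric point different from $Q,R$ --- impossible. This is exactly the paper's argument with the labels permuted: the paper fixes a $\sim_1$-edge $\langle P,Q\rangle$, assumes the other two are $\sim_2$, notes their intersections with $\Pi$ span a line in the plane, and that line produces a third quadric point on the secant $\langle P,Q\rangle$.

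Your second paragraph is correct but more elaborate than needed. Dissecting $\pi\cap Q^+(7,q)$ as a line-pair $l\cup m$ and then arguing that a putative line $\pi\cap\Pi$ must coincide with $l$ or $m$ does work, but the paper dispatches the point in one line: once you know there is a secant in $\pi$ (either $\langle P,R\rangle$ or $\langle Q,R\rangle$ in your labeling), any line contained in $\pi\cap\Pi$ would meet that secant in a quadric point distinct from its two endpoints, a contradiction. No conic classification is required.
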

\begin{proof}
Let $P,Q \in \cC$ with $P \sim_1 Q$, then the line $\langle P,Q \rangle$ is a secant to $Q^+(7,q)$. We may assume w.l.o.g that $P \sim_2 R$. 
If $Q \sim_2 R$ as well, then the lines $\langle P,R \rangle$, respectively $\langle Q,R \rangle$ are contained in $Q^+(7,q)$ and meet $\Pi$ 
in the respective points $S$ and $T$. So, the line $\langle S,T \rangle$ is contained in $\Pi$ but also in the plane $\langle P,Q,R \rangle$, hence $\langle S,T \rangle$
also meets the line $\langle P,Q \rangle$ in a point of $Q^+(7,q)$, different from $P$ and $Q$, a contradiction. Hence $Q \sim_1 R$. 
But now it is not possible that the plane $\langle P,Q,R \rangle$ meets $\Pi$ in a line $m$, since then the line $\langle Q,R \rangle$ would also
meet $m$ in a point not in $Q^+(7,q)$, a contradiction. 
\end{proof}

Now we will restrict to $q=2$.

\begin{lemma}\label{le:singulars}
Assume that $\cC$ is a $\sim_2$-clique in $\cG_3$. Let 
$\alpha := \langle \cC \rangle$. Then $\alpha$ is contained in $Q^+(7,q)$,
and hence $\alpha$ is a plane or a solid, meeting $S$ in a line or in a plane respectively. No $\sim_2$-clique can span a space of dimension larger than $3$. 
\end{lemma}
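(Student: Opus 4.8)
The plan is to exploit the one-or-all axiom of the polar space repeatedly. First I would observe that a $\sim_2$-clique $\cC$ is in particular a set of pairwise collinear points of $Q^+(7,q)$ (every pair spans a line contained in the quadric), and so any two of them span a totally singular line. The key geometric fact is the standard one for classical polar spaces: a set of pairwise collinear points of a polar space spans a totally singular subspace, i.e.\ $\alpha := \langle \cC \rangle$ is contained in $Q^+(7,q)$. I would prove this by induction on $|\cC|$ using the one-or-all property: if $\langle P_1,\dots,P_k \rangle$ is already totally singular and $P_{k+1}$ is collinear with each $P_i$, then $P_{k+1}^\perp$ contains every $P_i$, hence contains $\langle P_1,\dots,P_k\rangle$, and since $P_{k+1}^\perp$ meets the quadric exactly in the cone with vertex $P_{k+1}$, the span $\langle P_1,\dots,P_{k+1}\rangle$ lies on the quadric. (The excerpt already records the needed intersection facts about $\pi^\perp$, so this can be stated crisply.) Since the generators of $Q^+(7,q)$ have projective dimension $n=3$, the subspace $\alpha$ has dimension at most $3$, which is exactly the claim that no $\sim_2$-clique spans a space of dimension larger than $3$.

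Next I would pin down the intersection of $\alpha$ with the fixed generator $\Pi=S$. Each edge of $\cC$ is a line on the quadric \emph{meeting $\Pi$ in a point}, by the definition of $\sim_2$. So for distinct $P,Q \in \cC$ the line $\langle P,Q\rangle$ meets $\Pi$; in particular $\langle P,Q\rangle \subseteq \alpha \cap \Pi^{\text{(span)}}$ is nonempty — more precisely $\alpha$ and $\Pi$ share at least the point on $\langle P,Q \rangle$. To get the stated conclusion (a line when $\dim\alpha=2$, a plane when $\dim\alpha=3$), note that $\alpha$ and $\Pi$ are two totally singular subspaces of the same hyperbolic quadric, so $\alpha \cap \Pi$ is itself a totally singular subspace, and $\alpha + \Pi$ lies in $\PG(7,q)$, giving the dimension bound $\dim(\alpha\cap\Pi)\ge \dim\alpha + \dim\Pi - 7 = \dim\alpha - 4$. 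Combined with $\dim\alpha \le 3$ this is too weak on its own, so the real input is that \emph{every} line of $\cC$ meets $\Pi$. If $\dim\alpha = 2$ (a plane), then $\alpha\cap\Pi$ is a subspace of $\alpha$ which must meet every line of $\cC$ through each vertex; since $\cC$ is not contained in a single line (it has mixed... — rather, since $\cC$ spans $\alpha$), $\alpha \cap \Pi$ cannot be a single point (a point lies on only the lines through it), forcing $\alpha\cap\Pi$ to be a line. Similarly if $\dim\alpha=3$, a careful look at which totally singular subspaces of a solid-on-the-quadric meet all the generating lines forces $\alpha\cap\Pi$ to be a plane: if it were only a line, one could choose a vertex of $\cC$ off that line whose $\sim_2$-partners then force a line on the quadric through that vertex meeting $\Pi$ outside $\alpha\cap\Pi$, contradicting $\dim(\alpha\cap\Pi)=1$ inside the solid $\alpha$.

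I expect the main obstacle to be precisely this last bookkeeping step: showing that $\alpha \cap \Pi$ has the \emph{full} expected dimension (line, resp.\ plane) rather than something smaller, using only that each \emph{edge} of $\cC$ meets $\Pi$. The clean way is to argue contrapositively inside the totally singular subspace $\alpha$: the set of lines through a fixed vertex $P \in \cC$ that lie on the quadric and meet $\Pi$ is exactly the set of lines through $P$ meeting $P^{\perp}\cap\Pi$; since $P \notin \Pi$ and $P^\perp\cap\Pi$ is a hyperplane of $\Pi$ of dimension $n-1=2$, the $\sim_2$-neighbours of $P$ inside $\alpha$ all lie in the subspace $\langle P, P^\perp\cap\Pi\cap\alpha\rangle$; running this over all $P\in\cC$ and using that $\cC$ spans $\alpha$ forces $\alpha\cap\Pi$ to be a hyperplane of $\alpha$, i.e.\ a line or a plane according as $\dim\alpha = 2$ or $3$. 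Finally, for $q=2$ one checks that $\dim\alpha = 0$ (a single vertex, not a genuine clique) and $\dim\alpha=1$ (a single edge — here $\alpha\cap\Pi$ is the one point on that edge, and indeed $\cC$ spans only a line) are the degenerate cases, so the nontrivial $\sim_2$-cliques are exactly those spanning a plane or a solid as claimed.
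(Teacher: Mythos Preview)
Your argument is correct. The core steps --- (i) pairwise $\sim_2$-related points are pairwise collinear on the quadric, hence their span $\alpha$ is totally singular and so $\dim\alpha\le 3$; (ii) for any fixed $P\in\cC$ every other vertex of $\cC$ lies in $\langle P,\alpha\cap\Pi\rangle$, forcing $\alpha\cap\Pi$ to be a hyperplane of $\alpha$ --- are sound. Two small clean-ups: since $\alpha$ is totally singular and $P\in\alpha$, you already have $\alpha\subseteq P^\perp$, so $P^\perp\cap\Pi\cap\alpha=\Pi\cap\alpha$; and you only need to run the hyperplane argument for \emph{one} $P\in\cC$, not all of them.

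The paper's proof reaches the same conclusion by a more incremental, hands-on route specific to the small case: take three points $P,Q,R\in\cC$, observe their three pairwise lines meet $\Pi$ in three distinct points (hence a line $m$), so $\langle P,Q,R\rangle$ is a plane meeting $\Pi$ in $m$; add a fourth point to get a solid meeting $\Pi$ in a plane; a fifth point would yield a $4$-space contained in the quadric, a contradiction. Your approach is more structural and works verbatim for all $q$, while the paper's step-by-step build-up is more concrete and makes the intersection with $\Pi$ visible at each stage without invoking the general ``pairwise collinear $\Rightarrow$ singular span'' fact explicitly. The exploratory middle paragraph of your write-up (the Grassmann dimension bound and the aborted ``mixed\ldots'' aside) can simply be deleted in favour of the clean final argument.
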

\begin{proof}
Let $P,Q,R \in \cC$. Since $q=2$, $\langle P,Q,R \rangle$ is a plane. Each of 
the lines $\langle P,Q \rangle$, $\langle P,R \rangle$, and $\langle Q,R \rangle$
meets $\Pi$ in a different point, hence $\langle P,Q,R \rangle$ meets $\Pi$ in a line $m$.
If there exists another point $S \in \cC \setminus \langle P,Q,R \rangle$, then each of the lines connecting $S$ with one of the points $P,Q$, or $R$ meets $\Pi$ 
in a point not on $m$. Hence $\langle P,Q,R,S \rangle$ is then a solid, contained 
in $Q^+(7,q)$, meeting $\Pi$ in a plane. A similar argument for a point 
$T \in \cC \setminus \langle P,Q,R,S \rangle$ gives rise to a $4$-space contained
in $Q^+(7,q)$, a contradiction. 
\end{proof}

\begin{lemma}\label{le:ext1}
Let $\cC = \{P,Q,R\}$ be a clique of mixed type in $\cG_3$. Assume that
$T \not \in \langle P,Q,R\rangle$ extends $\cC$ to a larger clique
such that $T$ is $\sim_2$-adjacent with at least one of the vertices of $\cC$. 
Then $\gamma = \langle P,Q,R,T\rangle$
is a $3$-dimensional space and $\gamma \cap Q^+(7,q) = l Q^+(1,q)$, with
the line $l$ contained in $\Pi$, and there is only one way to extend 
$\{P,Q,R,T\}$ to a maximal clique of size $8$, which is then the set of 
vertices of $\cG_3$ in $l Q^+(1,q)$
\end{lemma}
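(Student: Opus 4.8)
The plan is the following. That $\gamma$ is $3$-dimensional is immediate: $\cC$ spans a plane $\pi$ by Lemma~\ref{triangle}, and $T\notin\pi$, so $\gamma=\langle\pi,T\rangle$ is a solid. By Lemma~\ref{triangle} we may assume $P\sim_2 Q$, $P\sim_1 R$, $Q\sim_1 R$ and $\pi\cap\Pi=\{S\}$, with $\langle P,Q\rangle$ the line of $Q^+(7,q)$ through $S$; since $\pi\cap Q^+(7,q)$ contains $\langle P,Q\rangle$ and the point $R\notin\langle P,Q\rangle$, it is a pair of lines through $S$, the second being $\langle R,S\rangle$ (it cannot pass through $P$ or $Q$, as $\langle P,R\rangle$ and $\langle Q,R\rangle$ are secant). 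Applying Lemma~\ref{triangle} to the triangles $\{P,R,T\}$, $\{Q,R,T\}$, $\{P,Q,T\}$ one checks that exactly one of two situations occurs: \emph{Case A}, $T\sim_2 R$ and $T\sim_1 P$, $T\sim_1 Q$; or \emph{Case B}, $T\sim_2 P$, $T\sim_2 Q$ and $T\sim_1 R$, so that $\{P,Q,T\}$ is a $\sim_2$-triangle.

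Next one shows that in both cases $\gamma\cap Q^+(7,q)$ is a union $\rho_1\cup\rho_2$ of two totally singular planes with $\rho_1\cap\rho_2=:l\subseteq\Pi$ and $\gamma\cap\Pi=l$; this is exactly the assertion $\gamma\cap Q^+(7,q)=lQ^+(1,q)$ with $l\subseteq\Pi$. In Case A, writing $l_2=\langle R,T\rangle$ and $\{S'\}=l_2\cap\Pi$ (with $S'\neq S$), the solid $\gamma=\langle\langle P,Q\rangle,l_2\rangle$ meets $Q^+(7,q)$ in a quadric of $\PG(3,2)$ containing the two skew singular lines $\langle P,Q\rangle$ and $l_2$, hence a hyperbolic quadric or a pair of planes; a grid is excluded because $\langle P,Q\rangle$, $\langle R,S\rangle$ and $\langle S,S'\rangle\subseteq\Pi$ are three distinct singular lines through $S$ that together span $\gamma$, whereas a grid has only two lines through a point. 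So $\gamma\cap Q^+(7,q)=\rho_1\cup\rho_2$; taking $R\in\rho_1$ gives $\rho_1=R^\perp\cap\gamma=\langle l_2,S\rangle$ (as $\rho_1\subseteq R^\perp$ and $\langle P,Q\rangle\cap R^\perp=\{S\}$), the same three-lines remark forces $S$ onto the axis $\rho_1\cap\rho_2$, and among the three singular lines of $\rho_1$ through $S$ the axis must be $\langle S,S'\rangle$: if it were $\langle S,R\rangle$ or $\langle S,T\rangle$ then $R$ or $T$ would lie in $\rho_2$, whence $\rho_2\subseteq R^\perp\cap\gamma=\rho_1$, contradicting $\rho_1\neq\rho_2$. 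Thus $l:=\langle S,S'\rangle\subseteq\Pi$, and $\Pi\cap\gamma=l$ since it contains $l$ but cannot be a plane (a plane inside $\rho_1\cup\rho_2$ equals $\rho_1$ or $\rho_2$, which contain vertices). Case B is analogous: $\rho_1:=\langle P,Q,T\rangle$ is a totally singular plane meeting $\Pi$ in a line $l$ by Lemma~\ref{le:singulars}, $R\notin\rho_1$, and $\gamma\cap Q^+(7,q)$ contains the plane $\rho_1$ and the point $R$, so (having at least $8$ points) is a pair of planes $\rho_1\cup\rho_2$ with $R\in\rho_2$; from $\rho_2=R^\perp\cap\gamma$ one gets $R^\perp\cap\rho_1=\rho_1\cap\rho_2$, so $R$ is collinear on $Q^+(7,q)$ with exactly this line of $\rho_1$, which must avoid $P,Q,T$ because $R\sim_1 P,Q,T$; the only such line of $\rho_1$ is $l$, so $\rho_1\cap\rho_2=l\subseteq\Pi$.

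The $8$ vertices of $(\rho_1\cup\rho_2)\setminus l$ form a clique containing $\{P,Q,R,T\}$: two of them lying in the same $\rho_i$ span a line of $Q^+(7,q)$ meeting $l\subseteq\Pi$, hence are $\sim_2$-adjacent, while two lying in different planes span a line meeting each $\rho_i$ in a single point, hence a secant, so are $\sim_1$-adjacent. By Corollary~\ref{cor:maxclique} this clique, of size $8$, is maximal. To finish, it suffices to show that any vertex $T'$ adjacent to all of $P,Q,R,T$ lies in $\gamma$, for then every $8$-clique through $\{P,Q,R,T\}$ is contained in the $8$-set $(\rho_1\cup\rho_2)\setminus l$ and hence equals it.

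So let $T'$ be adjacent to $P,Q,R,T$. Applying Lemma~\ref{triangle} to the triangles of $T'$ with the $\sim_2$-edges of $\{P,Q,R,T\}$ shows that $T'$ is collinear on $Q^+(7,q)$ with two distinct points of $l$, hence with all of $l$; so $\langle l,T'\rangle$ is a totally singular plane through $l$, corresponding to a point $x$ of the residual grid $Q^+(3,2)$ of $l$ in $Q^+(7,q)$, in which $\rho_1$ and $\rho_2$ are two non-collinear points and $\Pi$ is a line missing both. Whether $T'$ is $\sim_1$- or $\sim_2$-adjacent to each of the pairs $\{R,T\}$ and $\{P,Q\}$ (living in the two planes) restricts $x$ to a short list of grid points; every candidate other than the points of $\rho_1$ and $\rho_2$ is then eliminated, because either the corresponding plane through $l$ lies inside $\Pi$ (and so contains no vertex $T'$), or the line joining $T'$ to a suitable vertex among $P,Q,R,T$ lies on $Q^+(7,q)$ but misses $\Pi$, contradicting that $T'$ is adjacent to that vertex. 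Hence $\langle l,T'\rangle\in\{\rho_1,\rho_2\}$ and $T'\in\gamma$, as needed. This last elimination is the heart of the argument and the main obstacle; it is a short explicit check in the $9$-point grid $Q^+(3,2)$ and uses $q=2$ in an essential way.
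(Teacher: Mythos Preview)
Your argument is essentially correct and follows the same overall architecture as the paper's proof, but with two noteworthy differences in execution and one genuine incompleteness at the end.

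\textbf{Structural part.} The paper does not treat your Case~A and Case~B separately. After observing (as you do) that $\alpha\cap Q^+(7,2)$ is the pair of lines $\langle P,Q\rangle$ and $\langle R,S\rangle$, it introduces the auxiliary vertex $R'$ on $\langle R,S\rangle$, shows that $T\sim_2 R$ forces $T\sim_2 R'$, and concludes that $T^\perp$ contains exactly one of the two lines of $\alpha\cap Q^+(7,2)$; a relabelling then reduces everything to your Case~B. Your direct treatment of Case~A via the three concurrent singular lines $\langle P,Q\rangle,\langle R,S\rangle,\langle S,S'\rangle$ through $S$ is a valid alternative and arguably more transparent.

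\textbf{Extension part.} Both proofs pass to the residual grid $Q^+(3,2)$ of $l$. Your route to $l\subset (T')^\perp$---applying Lemma~\ref{triangle} to the $\sim_2$-edges of $\{P,Q,R,T\}$ to get collinearity of $T'$ with two distinct points of $l$---is cleaner than the paper's, which first disposes separately of the case $|U^\perp\cap l|=1$ before reaching the grid.

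\textbf{The gap.} Your final paragraph is only a sketch, and it has two defects. First, the phrase ``the pairs $\{R,T\}$ and $\{P,Q\}$ living in the two planes'' is only literally true in Case~A; in Case~B one has $P,Q,T\in\rho_1$ and $R\in\rho_2$, so the bookkeeping differs. Second, you never carry out the promised ``short explicit check'' in $Q^+(3,2)$. It does go through: placing $\rho_1/l,\rho_2/l$ at $(1,1),(2,2)$ and $\Pi/l$ along row~$3$, the admissible positions for $x=\langle l,T'\rangle/l$ are $(1,2),(1,3),(2,1),(2,3)$; each lies in row~$1$ or row~$2$, and the third point of that row is off row~$3$, so the corresponding singular solid $\langle\rho_i,\langle l,T'\rangle\rangle$ meets $\Pi$ only in $l$, whence $\langle T',V\rangle$ misses $\Pi$ for every vertex $V\in\rho_i\cap\{P,Q,R,T\}$ and $T'\not\sim V$. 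Since each $\rho_i$ contains at least one of $P,Q,R,T$ in both Cases~A and~B, this eliminates all four positions. You should write this out; as it stands the heart of the argument is asserted rather than proved.
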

\begin{proof}
By Lemma~\ref{triangle} we may assume that $P \sim_2 Q$ and $P \sim_1 R \sim_1 Q$, 
and the plane $\alpha := \langle P,Q,R \rangle$ meets $\Pi$ in a point $S$. Hence 
$\alpha \cap Q^+(7,q) = S Q^+(1,q)$. The line $\langle R,S \rangle$ contains one more
point $R'$ not in $\Pi$, for which clearly $R \sim_2 R'$ and $P \sim_1 R' \sim_1 Q$.

We have assumed that $T$ extends $\cC$ to a larger clique. Assume first that $T \sim_2 R$. 
Then the line $\langle T,R \rangle$ meets $\Pi$ in a point $S' \neq S$. 
By Lemma~\ref{triangle}, it is not possible that $T \sim_1 R'$. 
Hence the line $\langle T,R' \rangle$ is contained in $Q^+(7,q)$ and in the plane 
$\langle T,R,R' \rangle$, which meets $\Pi$ in the line $\langle S,S' \rangle$. Hence the line 
$\langle T,R' \rangle$ also meets $\Pi$ in a point of the line $\langle S,S' \rangle$, so we conclude
that $T \sim_2 R'$. 

It is not possible that $P \sim_2 T \sim_2 R$, since $P \sim_1 R$ and using Lemma~\ref{triangle}. 
Hence, the assumption that $T$ is $\sim_2$-adjacent with at least one of the points of $\cC$
implies that $T^\perp$ contains exactly one of the lines of 
$\alpha \cap Q^+(7,q) = S Q^+(1,q)$. W.l.o.g we may assume now 
that $T \sim_2 P$, $T \sim_2 Q$ and $T \sim_1 R$. Now define $\beta := \langle T,P,Q\rangle$. 
The clique $\{P,Q,T\}$ is a $\sim_2$-clique and by Lemma~\ref{le:singulars},
the plane $\beta$ is contained in $Q^+(7,q)$ and meets $\Pi$ in a line $l$, containing $S$. 
But then $\gamma := \langle \alpha,\beta \rangle$ is a solid containing a plane $\beta$
of $Q^+(7,q)$ and a point $R \not \in \beta$. Hence $\gamma \cap Q^+(7,q) = l Q^+(1,q)$, 

Consider now a vertex $U \not \in \gamma$ and assume that $U$ extends $\{P,Q,R,T\}$ 
to a larger clique. If $U^\perp$ meets $l$ in one 
point, then $U$ is collinear in $Q^+(7,q)$ with at least one of the points $P$, $Q$ or $T$. However,
the connecting line cannot meet $\Pi$, so there can be no adjacency $\sim_2$ or $\sim_1$ with 
at least one of the points $P$, $Q$, or $T$. Hence, if $U$ extends $\{P,Q,R,T\}$ to a larger clique,
then $l \subseteq U^\perp$. Hence, the $4$-space $\delta := \langle \{P,Q,R,T,U\} \rangle$ 
contains the planes $\beta$, $\beta'$ and $\beta'' := \langle U,l \rangle$, so
$\delta \cap Q^+(7,q) = l Q$, a cone with vertex $l$ and base $Q$, which is either a 
conic or a degenerate quadric in a plane. Now
$U \sim R$ and $P \sim U \sim Q$. But $l^\perp \cap Q^+(7,q) = l Q^+(3,q)$, and since 
$q=2$, the points of either $\beta$ or $\beta'$ are collinear with all the points of $\beta''$, 
including $U$. But the line connecting such a point
not on $l$ with $U$ will not meet $\Pi$. Hence, there can be no adjacency between $U$ and 
either the vertices in $\beta$ or in $\beta'$. 
Hence, the only possibility to extend further the clique $\{P,Q,R,T\}$ is by adding vertices 
in the planes $\beta$ and $\beta'$, and it is easy to see
that the vertices in both planes indeed make a clique of size $8$, the largest possible 
size. Finally, note that there would be no difference if the point $R'$ was used first to 
extend the initial clique $\{P,Q,R\}$.
\end{proof}

\begin{lemma}\label{le:ext2}
Let $\cC = \{P,Q,R\}$ be a clique of mixed type in $\cG_3$. Assume that
$T \not \in \langle P,Q,R\rangle$ extends $\cC$ to a larger clique such 
that $T$ is $\sim_1$-adjacent to all the vertices of $\cC$. Then 
$\gamma = \langle P,Q,R,T\rangle$
is a $3$-dimensional space and $\gamma \cap Q^+(7,q) = SQ(2,q)$, with
$S \in \Pi$, $Q(2,q)$ a conic not meeting $\Pi$ and there is only one way to extend 
$\{P,Q,R,T\}$ to a maximal clique of size $8$, which is then the set of 
vertices in a cone  $S Q^-(3,q)$ that meets $\Pi$ in a line.
\end{lemma}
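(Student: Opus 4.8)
The plan is to follow the template of the proof of Lemma~\ref{le:ext1}. By Lemma~\ref{triangle} I may assume $P\sim_2 Q$ and $P\sim_1 R\sim_1 Q$, and that $\alpha:=\langle P,Q,R\rangle$ meets $\Pi$ in the point $S:=\langle P,Q\rangle\cap\Pi$; then $\alpha\cap Q^+(7,q)=SQ^+(1,q)$ is the union of $\langle P,Q\rangle$ and a second line $\langle R,S\rangle$ through $S$ carrying $R$ and one further vertex $R'$, so $\{P,Q,R,R'\}$ is already a clique, with $\sim_2$-pairs $\{P,Q\}$ and $\{R,R'\}$. Since $T\notin\alpha$, $\gamma:=\langle P,Q,R,T\rangle$ is a solid, and the first task is to identify $\gamma\cap Q^+(7,q)$. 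As $P,Q,R,R'$ lie on totally isotropic lines through $S$ we have $\alpha\subseteq S^\perp$; if $T\notin S^\perp$ then $\gamma\cap S^\perp=\alpha$, so $S$ is a smooth point of the $\PG(3,q)$-quadric $\gamma\cap Q^+(7,q)$ with tangent plane $\alpha$, and since $\alpha$ meets that quadric in two lines the quadric must be $\cong Q^+(3,q)$; viewing it as a $3\times 3$ grid with $\langle P,Q\rangle$ a row and $\langle R,R'\rangle$ a column places $T$ on a grid point off that row and column, hence collinear on the quadric with $P$ or $Q$, against $T\sim_1 P$, $T\sim_1 Q$. Therefore $T\in S^\perp$, $\gamma\subseteq S^\perp$, and $\gamma\cap Q^+(7,q)$ is a cone with vertex $S$; it is not a plane, nor a pair of planes (either would put one of $\langle P,T\rangle,\langle Q,T\rangle,\langle R,T\rangle$ inside the quadric), nor a line (it contains two distinct lines), so $\gamma\cap Q^+(7,q)=SQ(2,q)$, with generators $\langle P,Q\rangle$, $\langle R,R'\rangle$ and $\langle S,T\rangle$, the last carrying a further vertex $T'$. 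A conic cone has no totally isotropic subspace besides the vertex, its generators and their parts, and none of these three generators lies in $\Pi$, so $\gamma\cap\Pi=\{S\}$; hence the vertices in $\gamma$ are exactly $P,Q,R,R',T,T'$, any two of them are joined by a secant or by a generator meeting $\Pi$ only in $S$, so they form a clique of size $6$, and any plane of $\gamma$ missing $S$ exhibits $Q(2,q)$ disjoint from $\Pi$. This proves the first assertion.

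Next I would show that a vertex $U\notin\gamma$ adjacent to $P,Q,R,T$ must lie in $S^\perp$. Otherwise $\langle\gamma,U\rangle\cap S^\perp=\gamma$, so $S$ is a smooth point of the $\PG(4,q)$-quadric $\langle\gamma,U\rangle\cap Q^+(7,q)$ whose tangent hyperplane $\gamma$ meets it in the conic cone $SQ(2,q)$; a check of the possible quadric types in $\PG(4,q)$ shows that only the nondegenerate $Q(4,q)$ has such a tangent section. Then $\langle S,U\rangle$ is a secant of $Q(4,q)$, whose polar plane meets $Q(4,q)$ in a nondegenerate conic, i.e. three distinct points, each collinear on $Q^+(7,q)$ with both $S$ and $U$; each such point therefore lies in $S^\perp\cap\langle\gamma,U\rangle\cap Q^+(7,q)=SQ(2,q)$ and, being different from $S$, is one of $P,Q,R,R',T,T'$. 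But by the one-or-all axiom no vertex can be collinear with both members of a $\sim_1$-pair (their joining secant would then lie on the quadric), so these three vertices would be pairwise not $\sim_1$-related; since the $\sim_2$-pairs inside $\{P,Q,R,R',T,T'\}$ form a perfect matching, which contains no triangle, this is impossible, and so $U\in S^\perp$.

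With $\delta:=\langle\gamma,U\rangle\subseteq S^\perp$ a $4$-space, I would then determine $\delta\cap Q^+(7,q)=S\mathcal B$, a cone with vertex $S$ over a $\PG(3,q)$-quadric $\mathcal B$ properly containing the base conic of $SQ(2,q)$. A case analysis rules out all types of $\mathcal B$ except $Q^-(3,q)$: $\mathcal B$ cannot be a single plane (then $S\mathcal B$ would be a generator solid equal to $\gamma$, against $U\notin\gamma$) or a line (too few points); if $\mathcal B$ were a pair of planes, distributing the three generators of $SQ(2,q)$ among the two totally isotropic generator solids comprising $S\mathcal B$ forces two of $P,Q,R,R'$ into a common totally isotropic plane, hence a $\sim_1$-pair that is collinear, a contradiction; and if $\mathcal B$ is a conic cone or $Q^+(3,q)$ — the two remaining cases, where the pigeonhole device alone does not conclude — one instead tracks $\delta\cap\Pi$: using that $U$ is forced into a common totally isotropic plane with (and hence $\sim_2$ to) some of $P,Q,R,R'$, one shows that $\delta\cap\Pi$ can be neither $\{S\}$ nor a line nor a totally isotropic plane without either putting one of $P,Q,R,T$ into $\Pi$ or producing a clique larger than the Delsarte bound, a contradiction. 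Hence $\delta\cap Q^+(7,q)=SQ^-(3,q)$, a cone with $q^3+q+1$ points. As before, $\delta\cap\Pi$ is $\{S\}$ or a generator line of this cone; the former would make the $q^3+q$ vertices of $SQ^-(3,q)\setminus\{S\}$ into a clique, exceeding the bound of Corollary~\ref{cor:maxclique}, so $\delta\cap\Pi$ is a line $l\ni S$, $SQ^-(3,q)$ meets $\Pi$ exactly in $l$, and its $q^3$ (here $8$) vertices off $\Pi$ form a clique — of maximal size by Corollary~\ref{cor:maxclique}, hence maximal. Finally, re-running the last two arguments with the $7$-clique $\{P,Q,R,R',T,T'\}\cup\{U\}$ (which spans $\delta$) in place of the $6$-clique shows that any maximal clique through $\{P,Q,R,T\}$ in fact contains $R'$ and $T'$ and then is contained in, hence equals, the vertex set of one determined cone $SQ^-(3,q)\setminus l$, which gives the uniqueness.

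The main obstacle is the case analysis for $\mathcal B$, and within it the elimination of $\mathcal B=Q^+(3,q)$ (and, similarly, of the conic cone): these are the possibilities for which two generators of $SQ(2,q)$ need not share a totally isotropic plane, so the short argument fails and one must carefully follow how the $4$-space $\delta$ meets the fixed generator $\Pi$. The two smooth-point dichotomies (identifying $Q^+(3,q)$, resp.\ $Q(4,q)$, as the only quadrics with the observed tangent section) are also delicate; both these and the tightness of the point counts are where $q=2$ is genuinely used — it makes every line trivially short, forces all generators of the relevant cones through $S$, and keeps the Delsarte bound in play — and a little extra care is needed because in even characteristic the polar form of $Q^+(7,q)$ is symplectic rather than orthogonal.
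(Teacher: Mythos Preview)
Your sketch can probably be completed, but it takes a substantially harder route than the paper and leaves the central step --- the elimination of $\mathcal B\cong Q^+(3,q)$ and of the conic-cone case --- as an assertion rather than an argument. You also work harder than necessary in characteristic $2$: the polar-plane step inside your $Q(4,q)$ needs genuine care because the associated bilinear form has a nucleus.

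The paper bypasses almost all of this via two observations you do not use. First, from $T\sim_1 P$ and $T\sim_1 Q$ the hyperplane $T^\perp$ misses both $P$ and $Q$; since $q=2$ the only remaining point on $\langle P,Q\rangle$ is $S$, so $S\in T^\perp$, i.e.\ $T\in S^\perp$, in one line --- no $Q^+(3,q)$-grid argument is needed. Second, and this is the key idea you are missing, the paper \emph{reuses Lemma~\ref{le:ext1}} to dispose of all $\sim_2$-adjacencies at once: if some extending vertex $U\notin\gamma$ were $\sim_2$-adjacent to a vertex of $\{P,Q,R,T\}$, one applies Lemma~\ref{le:ext1} to an appropriate mixed triangle together with $U$; the conclusion forces the unique maximal extension to live inside a solid that cannot contain the remaining vertex of $\{P,Q,R,T\}$, a contradiction. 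Hence $U$ is $\sim_1$-adjacent to \emph{all} of $P,Q,R,T$. Now the same one-line perp argument gives $U\in S^\perp$, and $\{P,R,T,U\}$ is a partial ovoid whose span meets $Q^+(7,q)$ in a $Q^-(3,q)$; therefore $\langle P,Q,R,T,U\rangle\cap Q^+(7,q)=SQ^-(3,q)$ with no case analysis on $\mathcal B$ whatsoever. Repeating the Lemma~\ref{le:ext1} trick for a further vertex $V$ and invoking the maximality of $Q^-(3,q)$ as a partial ovoid (Lemma~\ref{le:elliptic1}) gives uniqueness. For the intersection with $\Pi$ the paper uses that the base $Q^-(3,q)$ is an ovoid of the base $Q^+(5,q)$ of $S^\perp\cap Q^+(7,q)$, so $\Pi$ contains exactly one line of the cone; your Delsarte-bound argument for this step is a valid alternative.

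If you want to salvage your own line, note that your hard case $\mathcal B=Q^+(3,q)$ actually dies quickly by Lemma~\ref{triangle}: in the $3\times3$ grid the image of $U$ is collinear with exactly two of the three conic points, so $U$ is $\sim_2$-adjacent to both members of two of the pairs $\{P,Q\},\{R,R'\},\{T,T'\}$, producing a triangle with two $\sim_2$-edges and one $\sim_1$-edge, which Lemma~\ref{triangle} forbids. The conic-cone case does not fall to this trick when $u$ lies on a single generator, and there you really would need the $\delta\cap\Pi$ bookkeeping you only outline --- another reason the paper's appeal to Lemma~\ref{le:ext1} is the cleaner path.
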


\begin{proof}
By Lemma~\ref{triangle} we may assume that $P \sim_2 Q$ and $P \sim_1 R \sim_1 Q$, 
and the plane $\alpha := \langle P,Q,R \rangle$ meets $\Pi$ in a point $S$. Hence 
$\alpha \cap Q^+(7,q) = S Q^+(1,q)$. Since $P \sim_1 T \sim_1 Q$, $T^\perp$ meets
the line $\langle P,Q \rangle$ in the point $S$, and since $T \sim_1 R$, it is now clear
that $\gamma = \langle P,Q,R,T\rangle$ meets $Q^+(7,q)$ in a cone $S Q(2,q)$. W.l.o.g we may 
even assume that the base of this cone is the conic consisting of the points $\{P,R,T\}$.

Now assume that $U \not \in \gamma$ extends $\{P,Q,R,T\}$ to a larger clique. If $U$
is $\sim_2$-adjacent with one of the vertices of $\{P,Q,R,T\}$ then we can apply Lemma~\ref{le:ext1}. 
Assume e.g. that $U \sim_2 R$, then apply Lemma~\ref{le:ext1} on the mixed clique $\{P,Q,R\}$
and the vertex $U$ to conclude that $\gamma := \langle U,P,Q,R \rangle$ meets $Q^+(7,q)$ in a cone 
$lQ^+(1,q)$. However, this cone cannot contain the vertex $T$, while $T$ extends the clique 
$\{U,P,Q,R\}$ to a larger clique, a contradiction by Lemma~\ref{le:ext1}. This argument can be repeated
for any $\sim_2$-adjacency between $U$ and a vertex of $\{P,Q,R,T\}$. Hence, we must conclude
that $U$ is $\sim_1$-adjacent with all vertices of $\{P,Q,R,T\}$. But then the points
$\{P,R,T,U\}$ span a $3$-dimensional space meeting $Q^+(7,q)$ in a set of mutually non-collinear
points, hence $\langle P,R,T,U \rangle \cap Q^+(7,q) = Q^-(3,q)$. As we concluded that $U$
must be $\sim_1$ adjacent to $P$ and $Q$, necessarily $U^\perp$ meets the line $\langle P,Q \rangle$
in the point $S$. Hence $\langle P,Q,R,T,U \rangle \cap Q^+(7,q) = S Q^-(3,q)$. Now the base of
this cone is $Q^-(3,q)$, which is an ovoid of the base of the cone 
$S^\perp \cap Q^+(7,q) = S Q^+(5,q)$, so 
$\Pi$ contains exactly one line of the cone $SQ^-(3,q)$. Now we assume that $V$ extends the clique
$\{P,Q,R,T,U\}$ further. We can apply Lemma~\ref{le:ext1} again to conclude that a $\sim_2$-adjacency
between $V$ and any vertex of $\{P,Q,R,T,U\}$ is not possible. But now the vertices $\{P,R,T,U,V\}$
are a $\sim_1$-clique of size 5. Either this is maximal, or can be extended only with 
other $\sim_1$-adjacencies to a larger clique by Lemma~\ref{le:max5}, 
a contradiction now with the fact that the clique  $\{P,Q,R,T,U,V\}$ contains the adjacency 
$P \sim_2 Q$. Hence, the only way to extend the clique
$\{P,Q,R,T,U\}$ is to add the remaining vertices on the lines of the cone $S Q^-(3,q)$, which 
extends the clique to a clique of maximal size. 
\end{proof}

\begin{theorem}\label{main1}
Table~\ref{cliqueTab} lists all maximal cliques of the graph $\cG_3$. 
\end{theorem}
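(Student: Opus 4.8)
The plan is to take an arbitrary maximal clique $\cC$ of $\cG_3$ and show it is one of the five configurations of Table~\ref{cliqueTab}, distinguishing whether $\cC$ is of type $1$, type $2$, or mixed. Since $q=2$, the elementary fact I rely on throughout is that a line of $\PG(7,2)$ joining two points of $Q^+(7,2)$ is either contained in the quadric or is a secant; hence a $\sim_1$-clique is exactly a partial ovoid of $Q^+(7,2)$ avoiding $\Pi$, and a $\sim_2$-clique is a set of points in a singular subspace avoiding $\Pi$, as in Lemma~\ref{le:singulars}. Suppose first $\cC$ is of type $1$. If $|\cC|\le 4$, a short point-count in $Q^+(7,2)$ together with Lemma~\ref{le:4pts} shows $\cC$ always extends: by the fifth point of the elliptic solid it spans, when that point lies off $\Pi$, or, via Lemma~\ref{le:4pts}, by embedding $\cC$ in the (unique, by the preliminaries) ovoid $\cO$ and passing to the $8$-vertex clique $\cO\setminus\Pi$. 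If $|\cC|\ge 5$, any four of its points span an elliptic solid meeting $Q^+(7,2)$ in a $Q^-(3,2)$; either $\cC$ is exactly this $Q^-(3,2)$, which avoids $\Pi$ and is maximal by Lemma~\ref{le:max5} (first row), or Lemma~\ref{le:4pts} forces $\cC\subseteq\cO$ for the unique ovoid $\cO$ through those four points, whence $\cC\subseteq\cO\setminus\Pi$ and maximality gives $\cC=\cO\setminus\Pi$ (second row), maximal by Lemma~\ref{le:ovoid1} and Corollary~\ref{cor:maxclique}. In particular no maximal partial ovoid avoiding $\Pi$ of size $6$ or $7$ occurs.

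Suppose $\cC$ is of type $2$. By Lemma~\ref{le:singulars}, $\langle\cC\rangle$ is a plane or a solid contained in $Q^+(7,2)$. A solid of $Q^+(7,2)$ is a generator; as $\cC$ avoids $\Pi$, this solid must meet $\Pi$ in a plane, $\cC$ has $8$ vertices, and $\cC$ is maximal by Corollary~\ref{cor:maxclique} (third row). If $\langle\cC\rangle$ is a plane, it meets $\Pi$ in a line and $\cC$ has only $4$ vertices; but a parity argument on the two families of generators of $Q^+(7,2)$ shows that of the two generators through that plane exactly one meets $\Pi$ in a plane, so $\cC$ extends into a clique of the third row, contradicting maximality.

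Suppose finally $\cC$ is of mixed type. First, $\cC$ contains a mixed triangle $\{P,Q,R\}$: otherwise a $\sim_1$-edge and a $\sim_2$-edge joined through a common vertex would give a triangle with two $\sim_2$-edges and one $\sim_1$-edge, contradicting Lemma~\ref{triangle}. By Lemma~\ref{triangle}, $\alpha:=\langle P,Q,R\rangle$ is a plane with $\alpha\cap\Pi=\{S\}$ and $\alpha\cap Q^+(7,2)=SQ^+(1,2)$, and its four vertices $\{P,Q,R,R'\}$ are mutually adjacent, hence all lie in $\cC$. A dimension count, projecting from $S$ onto the base $Q^+(5,2)$ of $S^\perp\cap Q^+(7,2)$ and extending the resulting secant line to an elliptic $Q^-(3,2)$, shows $\{P,Q,R,R'\}$ already lies in a clique of the fourth or fifth row, so it is not maximal; since $\alpha$ carries no further vertex, $\cC$ contains a vertex $T\notin\alpha$. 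By Lemma~\ref{triangle}, $T$ is either $\sim_2$-adjacent to one of $P,Q,R$ or $\sim_1$-adjacent to all three; applying Lemma~\ref{le:ext1} or Lemma~\ref{le:ext2} accordingly, $\cC$ extends in a unique way to a clique of size $8$, namely the vertices in a cone $\ell Q^+(1,2)$ with $\ell\subseteq\Pi$ (fifth row) or the vertices in a cone $SQ^-(3,2)$ meeting $\Pi$ in a line (fourth row). Thus in every case $\cC$ is one of the five configurations of Table~\ref{cliqueTab}.

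The main obstacle is the mixed case. The case split there is exhaustive precisely because of Lemma~\ref{triangle}; the delicate point is proving that a mixed clique of size $3$ or $4$ is never maximal, so that Lemmas~\ref{le:ext1}–\ref{le:ext2} may be invoked on a genuine extra vertex $T$ outside $\alpha$. This requires the explicit construction inside $Q^+(7,2)$ of a cone $\ell Q^+(1,2)$ or $SQ^-(3,2)$ through the plane $\alpha$, which in turn rests on the existence of an elliptic section of $Q^+(5,2)$ through a prescribed secant line; checking that such a cone indeed meets $\Pi$ in a line (not a plane) is a further small verification. A secondary point of care, already noted above, is ruling out maximal partial ovoids avoiding $\Pi$ of intermediate size, which follows from Lemma~\ref{le:4pts} and the uniqueness of the ovoid of $Q^+(7,2)$.
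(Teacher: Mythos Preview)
Your proof is correct and follows essentially the same strategy as the paper: split into $\sim_1$, $\sim_2$, and mixed cliques, and reduce via Lemmas~\ref{le:4pts}, \ref{le:max5}, \ref{le:singulars}, \ref{le:ext1}, \ref{le:ext2} to the five rows of Table~\ref{cliqueTab}. The differences are that you are more explicit in places the paper leaves implicit: the parity argument on the two families of generators showing that a $\sim_2$-clique spanning only a plane always extends into a generator meeting $\Pi$ in a plane; the verification that a mixed clique actually contains a mixed triangle; and the check that a mixed clique confined to the plane $\alpha=\langle P,Q,R\rangle$ (hence of size at most $4$) is never maximal, obtained by producing a cone $SQ^-(3,2)$ through $\alpha$. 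These are genuine clarifications of steps the paper's short proof takes for granted, but they do not change the underlying approach.
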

\begin{proof}
First, consider a set $\cC$ of $4$ points in general position and assume that $\cC$ is a $\sim_1$ 
clique. Then $\langle \cC\rangle \cap Q^+(7,2) = Q^-(3,q)$. If $\langle \cC \rangle$ meets $\Pi$ 
in a point, then this point is unique, and by Lemma~\ref{le:4pts} we can extend $\cC$ uniquely 
to an ovoid of $Q^+(7,2)$, which determines a maximal clique of size $8$. If $\langle \cC \rangle$
does not meet $\Pi$, then either adding the unique point of $Q^-(3,q) \setminus \cC$ gives rise to a
maximal clique of size $5$, by Lemma~\ref{le:max5}, or, by Lemma~\ref{le:4pts}, there is a unique
extension of $\cC$ to an ovoid, which again determines a clique of size $8$. 

By Lemma~\ref{le:singulars}, any $\sim_2$-clique $\cC$ must be contained in a subspace of the quadric. 
Hence, any $\sim_2$-clique can be extended to a $\sim_2$-clique of size $8$ consisting of all the
vertices contained in $\langle \cC \rangle$. 

Finally, assume that $\cC$ is a clique of mixed type. Then it is sufficient to consider a subset 
$\{P,Q,R\} \subset \cC$ which is a clique of mixed type and to use Lemmas~\ref{le:ext1}~and~\ref{le:ext2} 
to conclude that a maximal clique of mixed type must be one of the two examples listed in
Table~\ref{cliqueTab}.
\end{proof}

\subsection{The isomorphism issue}

We are now ready to state the non-isomorphism between $NO^+(2n+2,2)$ and $\cG_n$, while $n\geq3$. Next theorem collects all results about maximal cliques of $\cG_3$, in order to distinguish $NO^+(8,2)$ and $\cG_3$.


\begin{theorem}\label{main2}
    The graphs $\cG_n$ and $NO^+(2n+2,2)$ are not isomorphic when $n\geq3$.
\end{theorem}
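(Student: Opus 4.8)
The plan is to separate the two cases $n=3$ and $n\geq 4$. For $n=3$ the non-isomorphism follows directly from the classification in Theorem~\ref{main1}: $\cG_3$ possesses maximal cliques of size $5$ (the elliptic quadrics $Q^-(3,2)$ not meeting $\Pi$, by Lemma~\ref{le:max5}), whereas the maximal cliques of $NO^+(8,2)$ are, as recalled earlier from \cite{brvan}, all of the same size $2^n=8$. A graph invariant — the set of sizes of maximal cliques — distinguishes the two graphs, so they cannot be isomorphic. I would phrase this as the base case of the argument.

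For the general case $n\geq 4$ I would exhibit a maximal clique of $\cG_n$ whose size differs from $2^n$, which is the common size of all maximal cliques of $NO^+(2n+2,2)$ by the characterisation in \cite{brvan}. The natural candidate is again an elliptic quadric: by Lemma~\ref{le:elliptic1} an elliptic quadric $Q^-(3,q)$ contained in $Q^+(2n+1,q)$ is a maximal partial ovoid, hence (by the analogue of Lemma~\ref{le:ovoid2}) its $q^2+1$ points, here $5$ points, form a $\sim_1$-clique. One then has to argue it is maximal in $\cG_n$: repeating the argument of Lemma~\ref{le:max5}, a vertex $T$ extending it in $\sim_1$ would enlarge the partial ovoid, contradicting maximality, while a vertex $T$ with $T\sim_2 P$ for some $P\in Q^-(3,q)$ forces the line $\langle P,T\rangle$ to meet $\Pi$ in a point $S$, so $\langle T, Q^-(3,q)\rangle\cap Q^+(2n+1,q)$ is a cone $SQ^-(3,q)$; since the base $Q^-(3,q)$ is an ovoid of the base $SQ^+(2n-3,q)$ of the tangent cone $S^\perp\cap Q^+(2n+1,q)$, the generator $\Pi$ through $S$ must contain exactly one line of the cone, forcing $Q^-(3,q)$ to meet $\Pi$ — a contradiction if we choose $Q^-(3,q)$ disjoint from $\Pi$. (Such a disjoint elliptic section exists: $\Pi$ has $\theta_n$ points while $Q^+(2n+1,q)$ has many more, and the count of elliptic sections through a fixed point is a fixed positive fraction, so by inclusion–exclusion or a direct count there are elliptic sections avoiding $\Pi$; for the purposes of the theorem one such is enough.) Then $5 = q^2+1 < q^n+1 \leq 2^n$ for $n\geq 3$ with $q=2$, so this maximal clique has size strictly smaller than $2^n$, and again the set of maximal-clique sizes is not the same in the two graphs.

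The main obstacle I anticipate is the maximality argument for the elliptic-quadric clique in the general $\cG_n$: Lemma~\ref{le:max5} and the mixed-type analysis of Section~\ref{sec4} were carried out only for $n=3$ and rely repeatedly on $q=2$ forcing three non-collinear points to span a plane, and on explicit small quadric geometry (the $35$ lines of $\Pi$, the base $Q^+(5,2)$, etc.). For $n\geq 4$ the tangent space $S^\perp\cap Q^+(2n+1,q)$ has base $Q^+(2n-3,q)$ rather than $Q^+(5,q)$, so one has to check that the relevant statement ``an ovoid of the base lies in exactly one generator line-wise'' still goes through — which it does, because the whole argument only uses that $Q^-(3,q)$ is an ovoid of the larger hyperbolic base and that $\Pi$ restricted to a point $S$ of it is a generator of that base, properties independent of $n$. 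Once that is in place, everything else is bookkeeping: the only genuinely new input over the $n=3$ case is verifying the existence of an elliptic section $Q^-(3,q)$ of $Q^+(2n+1,q)$ disjoint from the fixed generator $\Pi$, which I would do by a short counting argument (the number of such sections through a point of $\Pi$ is bounded, and there are strictly more elliptic sections than $|\Pi|$ times this bound). I would then assemble the two cases into a single statement: for all $n\geq 3$, $\cG_n$ has a maximal clique of size $q^2+1=5$, while $NO^+(2n+2,2)$ does not, hence the graphs are non-isomorphic.
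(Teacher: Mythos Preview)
Your base case $n=3$ is fine and matches the paper. The gap is in the step for $n\geq 4$: the elliptic quadric $Q^-(3,2)$ disjoint from $\Pi$ is \emph{not} a maximal clique of $\cG_n$ once $n\geq 4$, so the invariant you propose does not separate the graphs. Two things go wrong in your generalisation of Lemma~\ref{le:max5}. First, the base of the tangent cone $S^\perp\cap Q^+(2n+1,q)$ is $Q^+(2n-1,q)$, not $Q^+(2n-3,q)$. Second, and decisively, $Q^-(3,q)$ is an ovoid only of $Q^+(5,q)$; for $n>3$ it is not an ovoid of $Q^+(2n-1,q)$, so there is no reason the generator $\Pi$, quotiented by $S$, must meet it. In fact one can exhibit an extending vertex explicitly. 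Since $\Pi\cap\pi_3=\emptyset$ and $\Pi^\perp=\Pi$, one has $\dim(\Pi\cap\pi_3^\perp)=n-4$, so for $n\geq 4$ there exists $S\in\Pi\cap\pi_3^\perp$. With $q=2$, put $T:=P_1+S$ for any $P_1\in Q^-(3,2)$. Then $Q(T)=Q(P_1)+Q(S)+f(P_1,S)=0$, so $T$ lies on the quadric; $T\notin\Pi$ since $P_1\notin\Pi$; the line $\langle T,P_1\rangle=\{T,P_1,S\}$ is totally isotropic and meets $\Pi$, so $T\sim_2 P_1$; and for $j\neq 1$ one has $f(T,P_j)=f(P_1,P_j)+f(S,P_j)=1+0\neq 0$, whence $T\sim_1 P_j$. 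Thus $T$ extends the $5$-clique. Note that exactly for $n=3$ the intersection $\Pi\cap\pi_3^\perp$ is empty, which is why Lemma~\ref{le:max5} works there and fails beyond.

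The paper does not attempt to manufacture a small maximal clique of $\cG_n$ directly. For $n>3$ it instead passes to a hyperbolic $7$-section $\Sigma$ of $\PG(2n+1,2)$ meeting $\Pi$ in a solid; the induced subgraph of $\cG_n$ on the vertices lying in $\Sigma$ is a copy of $\cG_3$, while a hyperbolic $7$-section induces a copy of $NO^+(8,2)$ inside $NO^+(2n+2,2)$, and the $n=3$ case (Theorem~\ref{main1}) is then invoked. If you want to rescue a direct maximal-clique argument for general $n$, you need a different family of cliques whose maximality survives the passage from $\cG_3$ to $\cG_n$; the elliptic $3$-quadrics do not.
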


\begin{proof}
Firstly, while $n=3$, $NO^+(8,2)$ contains only maximal 8-cliques, while $\cG_3$ contains a family of maximal 5-cliques, and they are not isomorphic. Now, let $n>3$ and consider a 7-space $\Sigma$ of $\PG(2n+1,2)$ such that $\Sigma\cap Q^+(2n+1,2)=Q^+(7,2)$, which meets the fixed generator $\Pi$ in a solid. Then the induced subgraphs by $NO^+(2n+2,2)$ and $\cG_n$ on $\Sigma$ are $NO^+(8,2)$ and $\cG_3$, in which we have maximal cliques of different sizes, by Theorem \ref{main1}.
\end{proof}

\begin{remark}
Note that for asymptotically large values of $n$ the graph $NO^+(2n+2,2)$ contains a subgraph $\cG_3$ and the graph $\cG_n$ contains a subgraph $NO^+(8,2)$. More generally, let $H$ be a graph with $s$ vertices and $r$ edges. The number of induced subgraphs $H$ in a random graph with $v$ vertices and with edge-probability (i.e. ratio between the number of edges and the number of unordered pair of vertices) equal to $p$ is
\begin{equation}\label{K-S}
(1+o(1))p^r(1-p)^{\binom{s}{2}-r}\frac{v^s}{|Aut(H)|}
\end{equation}
see \cite[Section 4.4]{KS}. Now we compute $|Aut(NO^+(8,2)|=|PGO^+(8,2)|=348364800$, $|Aut(\cG_3)|=1290240$. Both $NO^+(8,2)$ and $\cG_3$ have $s=120$ vertices and $r=3360$ edges, while both $NO^+(2n+2,2)$ and $\cG_n$ have $s=2^{2n+1}-2^n$ vertices and edge-probability $p=\frac{2^{2n}-1}{2^{2n+1}-2^n-1}$. For small $n$, Equation \eqref{K-S} gives asymptotically 0 subgraphs. On the other hand, each hyperbolic section $PG(8,2)$ of $PG(2n+2,2)$ gives rise to a $NO^+(8,2)$ in $NO^+(2n+2,2)$, while there is an induced $\cG_3$ in $\cG_n$ for any hyperbolic section cutting the fixed generator in a 3-space. Hence Theorem \ref{main2} holds. In fact, repeated inductive arguments show that $NO^+(2n+2,2)$ does not contain an induced $\cG_{n-1}$, and $\cG_n$ does not contain an induced $NO^+(2n,2)$, where the induction basis is still Theorem \ref{main1}.
\end{remark}

\begin{remark}
  According to the statement of \cite[Theorem 1]{BIK}, the graph $\cG_n$ for any $q$ has automorphism group $q^{\frac{n(n+1)}{2}}.PSL(n+1,q)$. Thus, the non-isomorphism would belong to the group size. The proof provided in Theorem \ref{main2} provides pure combinatorial and self-contained arguments that show the novelty of the results presented.
\end{remark}
\begin{remark}
    An alternative combinatorial proof for the non-isomorphism would take into account the induced subgraphs $\Gamma_{2}(v)$ on vertices at distance 2 from a fixed vertex $v\in\Gamma$. Then $NO^+(8,2)_2(v)$ is known to be distance-regular of diameter 3 while ${\mathcal G_{3}}_{,2}(v)$ has diameter 2 from the construction in Theorem \ref{mainth}. We presented the geometric proof in order to give a finite geometric interpretation of both strongly regular graphs. 
\end{remark}

\section{Conclusion}
In this paper, we provided the proof of \cite[Theorem 1]{BIK}, giving a construction of strongly regular graphs in hyperbolic quadrics $Q^+(2n+1,q)$. We have seen that, while $q=2$, such a family gives examples of graphs that are cospectral to $NO^+(2n+2,2)$, but non-isomorphic whenever $n\geq3$. In \cite{Btab}, there are some examples of graphs cospectral to $NO^{+}(2n+2,2)$. One can ask what are the isomorphism class and the switching class of $\cG_n$ beneath its cospectral graphs, and further research line would eventually study the isomorphisms and the geometric characterisation.





\smallskip
{\footnotesize
\noindent\textit{Acknowledgments.}
This work was supported by the Italian National Group for Algebraic and Geometric Structures and their Applications (GNSAGA-- INdAM), partially by the Scientific Research Network {\em Graphs, Association schemes and Geometries: structures, algorithms and computation} (W003324N) of the Research Foundation -- Flanders (Belgium), FWO, and also partially by VUB-OZR project "Discrete Structures, and their applications in Data Science” (OZR3637). The authors also wish to thank Ferdinand Ihringer for pointing out Remark \ref{ihr}.}


\begin{thebibliography}{SK}
 \bibitem{fining}J. Bamberg, A. Betten, P. Cara, J. De Beule, M. Lavrauw, M. Neunhoeffer, M. Horn, \textit{FinInG, Finite Incidence Geometry}, Version 1.5.6 (2023)
(Refereed GAP package), https://gap-packages.github.io/FinInG.
  \bibitem{Bose}R.C. Bose, \textit{Strongly regular graphs, partial geometries and partially balanced designs}, Pac. Journal Math., 1963, 13(2), 389-419. 
  \bibitem{Btab}A.E. Brouwer,  \textit{Parameters of strongly regular graphs}, https://www.win.tue.nl/~aeb/graphs/srg/srgtab.html.
 \bibitem{BH}A.E. Brouwer, W.H. Haemers, \textit{Spectra of graphs}, Universitext, Springer, New York, 2012.
 \bibitem{BIK}A.E. Brouwer, A.V. Ivanov, M.H. Klin, \textit{Some new strongly regular graphs}, Combinatorica, 1989, 9(4), 339-344.
  \bibitem{brvan}A.E. Brouwer, H. Van Maldeghem, \textit{Strongly Regular Graphs}, Encyclopedia of Mathematics and its Applications, Cambridge University Press, 2022.
  \bibitem{CP}P.J. Cameron, C. Praeger, \textit{Partitioning into Steiner systems}, in {\em Combinatorics '88}, A. Barlotti (Ed.), Mediterranean Press, Roma, 1992, pp. 61-71.  
   \bibitem{jdbgit25} J. De Beule. jdebeule/graphs-hyperbolic-quadrics: Classification and characterization of maximal cliques of $\mathcal{G}_3(2)$, March 2025, \url{https://doi.org/10.5281/zenodo.15046359}
  \bibitem{Delsarte}P. Delsarte, \textit{An algebraic approach to the association schemes of coding theory}, Ph.D. Thesis, Université Catholique de Louvain, Louvain, 1973.
 \bibitem{GAP}GAP Group, \textit{GAP-Groups, Algorithms, and Programming}, Version 4.11.1, 2021.
 \bibitem{GS}J.M. Goethals, J.J. Seidel, \textit{Strongly Regular Graphs Derived from Combinatorial Designs}, Canadian Journal of Mathematics, 1970, 22, pp. 514-597.
 \bibitem{HT}J.W.P. Hirschfeld, J. Thas, \textit{General Galois Geometry}, Springer-Verlag London, 2016.
 \bibitem{KS}M. Krivelevich, B. Sudakov, \textit{Pseudo-random Graphs}, in "More Sets, Graphs and Numbers.", E. Gy\H{o}ri, G.O.H. Katona, L. Lov\'{a}sz, T. Fleiner (Eds.)  Bolyai Society Mathematical Studies, vol 15. Springer, Berlin, Heidelberg, 2006. 
  \bibitem{RS}F. Romaniello, V. Smaldore, \textit{On a graph isomorphic to $NO^{+}(6,2)$}, Bull. Inst. Comb. Appl., 2024, 100, pp. 151-161.
\end{thebibliography}
\end{document}